\documentclass[11pt]{article}
\usepackage[tbtags]{amsmath}
\usepackage{amssymb}
\usepackage{amsthm}
\usepackage[misc]{ifsym}
\usepackage{cases}
\usepackage{mathrsfs}
\usepackage{graphicx}
\usepackage{xcolor}

\numberwithin{equation}{section}
\setlength{\textwidth}{160mm} \setlength{\textheight}{218mm}
\oddsidemargin=2 mm \topskip 0.5cm \topmargin=-0.5in
\normalsize

\title{\bf Linear-Quadratic Stackelberg Mean Field Games and Teams with Arbitrary Population Sizes \thanks{This work is supported by National Key R\&D Program of China (2022YFA1006104), National Natural Science Foundations of China (12471419, 12271304), and Shandong Provincial Natural Science Foundations (ZR2024ZD35, ZR2022JQ01, ZR2022JQ31).}}

\author{\normalsize Wenyu Cong\thanks{\it School of Mathematics, Shandong University, Jinan 250100, P.R. China, E-mail: congwenyu@mail.sdu.edu.cn}, Jingtao Shi\thanks{\it Corresponding author. School of Mathematics, Shandong University, Jinan 250100, P.R. China, E-mail: shijingtao@sdu.edu.cn}, Bingchang Wang\thanks{\it School of Control Science and Engineering, Shandong University, Jinan 250061, P.R. China, E-mail: bcwang@sdu.edu.cn}}


\newtheorem{mythm}{Theorem}[section]
\newtheorem{mydef}{Definition}[section]

\newtheorem{Remark}{Remark}[section]
\newtheorem{example}{Example}[section]

\begin{document}

    \maketitle

    \noindent{\bf Abstract:}\quad This paper addresses a linear-quadratic Stackelberg mean field (MF) games and teams problem with arbitrary population sizes, where the game among the followers is further categorized into two types: non-cooperative and cooperative, and the number of followers can be finite or infinite.
    The leader commences by providing its strategy, and subsequently, each follower optimizes its individual cost or social cost.
    A new {\it de-aggregation method} is applied to solve the problem, which is instrumental in determining the optimal strategy of followers to the leader's strategy.
    Unlike previous studies that focus on MF games and social optima, and yield decentralized asymptotically optimal strategies relative to the centralized strategy set, the strategies presented here are exact decentralized optimal strategies relative to the decentralized strategy set.
    This distinction is crucial as it highlights a shift in the approach to MF systems, emphasizing the precision and direct applicability of the strategies to the decentralized context.
    In the wake of the implementation of followers' strategies, the leader is confronted with an optimal control problem driven by high-dimensional forward-backward stochastic differential equations (FBSDEs).
    By variational analysis, we obtain the decentralized strategy for the leader.
    By applying the de-aggregation method and employing dimension expansion to decouple the high-dimensional FBSDEs, we are able to derive a set of decentralized Stackelberg-Nash or Stackelberg-team equilibrium solution for all players.

    \vspace{2mm}

    \noindent{\bf Keywords:}\quad Stackelberg game, de-aggregation method, linear-quadratic stochastic optimal control, mean field games, social optima, decentralized control

    \vspace{2mm}

    \noindent{\bf Mathematics Subject Classification:}\quad 93E20, 60H10, 49K45, 49N70, 91A23

    \section{Introduction}

    {\it MF games} have attracted significant academic interest and are being applied across various domains,  such as system control, applied mathematics, and economics (\cite{Bensoussan-Frehse-Yam-13}, \cite{Gomes-Saude-14}, \cite{Caines-Huang-Malhame-17}, \cite{Carmona-Delarue-18}).
    The MF games theory serves as a framework for describing the behavior of models characterized by a large population, where the influence of the overall population is significant, despite the negligible impact on individual entities.
    The theoretical framework of MF games, pioneered by Lasry and Lions \cite{Lasry-Lions-07} and independently by Huang et al. \cite{Huang-Malhame-Caines-06}, have proved to be highly effective and tractable for analyzing stochastic controlled systems that are weakly coupled through MF interactions.
    Specifically, within the {\it linear-quadratic} (LQ) framework, MF games provide a flexible modeling apparatus applicable to a broad spectrum of real-world issues.
    The solutions yielded by LQ-MF games demonstrate significant and graceful characteristics.
    Current scholarly discourse has extensively explored MF games, especially within the LQ framework (\cite{Li-Zhang-08}, \cite{Bensoussan-Sung-Yam-Yung-16}, \cite{Moon-Basar-17}, \cite{Huang-Zhou-20}, \cite{Cong-Shi-24b}).
    Huang et al. \cite{Huang-Caines-Malhame-07} conducted research on $\varepsilon$-Nash equilibrium strategies in the context of LQ-MF games with discounted costs, building upon the {\it Nash certainty equivalence} (NCE) approach.
    Later, the NCE approach was extended to cases involving long-term average costs, as detailed in Li and Zhang \cite{Li-Zhang-08}.
    In the domain of MF games with major players, Huang \cite{Huang-10} investigated continuous-time LQ games, providing insights into $\varepsilon$-Nash equilibrium strategies.
    Huang et al. \cite{Huang-Wang-Wu-16} introduced a backward-major and forward-minor setup for an LQ-MF games, and decentralized $\varepsilon$-Nash equilibrium strategies for major and minor agents were obtained.
    Huang et al. \cite{Huang-Wang-Wu-16b} analyzed the backward LQ-MF games of weakly coupled stochastic large population systems under both full and partial information scenarios.
    Huang and Li \cite{Huang-Li-18} explored an LQ-MF games related to a class of stochastic delayed systems.
    Xu and Zhang \cite{Xu-Zhang-20} examined a general LQ-MF games for stochastic large population systems, where the individual diffusion coefficient depends on the state and control of the agent.
    Bensoussan et al. \cite{Bensoussan-Feng-Huang-21} considered an LQ-MF games with partial observation and common noise.

    In addition to noncooperative games, the concept of social optima within MF models has garnered significant attention.
    Social optimum control involves all participants collaborating to minimize the collective social cost, which is the aggregate of individual costs.
    This approach is characteristic of a team decision-making problem, as referenced in \cite{Ho-80}.
    Huang et al. \cite{Huang-Caines-Malhame-12} explored the social optima in the context of LQ-MF control and derived an asymptotic solution suitable for a team-optimal scenario.
    Further, Huang and Nguyen \cite{Huang-Nguyen-16} designed socially optimal strategies by analyzing FBSDEs.
    Arabneydi and Mahajan \cite{Arabneydi-Mahajan-15} delved into team-optimal control strategies under conditions of finite population and partial information.

    The Stackelberg differential game, alternatively referred to as the leader-follower differential game, emerges in markets where some firms have the upper hand and can exert more influence over others.
    In light of this dynamic, Stackelberg \cite{Stackelberg-1952} first proposed the idea of a hierarchical solution. Within this framework, there are two participants with distinct roles, one designated as the leader and the other as the follower.
    To attain a pair of Stackelberg equilibrium solutions, the differential game is generally divided into two phases.
    The first phase involves addressing the follower's problem.
    Initially, the leader publicly announces their strategy, transforming the two-player differential game into the single-player optimal control problem for the follower.
    The follower then promptly chooses an optimal strategy in response to the leader's declared strategy, aiming to optimize their own cost function.
    The second phase is where the leader selects an optimal strategy, presupposing that the follower will opt for their own optimal strategy, also with the goal of minimizing (or maximizing) their cost function.
    This forms another optimal control problem, this time for the leader.
    In summary, decision-making must be jointly accomplished by both players.
    Due to the inherent role asymmetry, one player must be subordinate to the other, necessitating that one player makes their decision after the other has concluded their decision-making process.
    The Stackelberg differential game is highly relevant in financial and economic practices, and it has attracted growing attention in applied research.
    Bagchi and Ba\c{s}ar \cite{Bagchi-Basar-81} explored an LQ Stackelberg stochastic differential game where the diffusion coefficient in the state equation does not involve state and control variables.
    This study laid the groundwork for understanding the strategic interactions under uncertainty within a leader-follower framework.
    Yong \cite{Yong-02} delved into a more generalized framework of LQ leader-follower differential game problems.
    In this study, coefficients of the state system and cost functional are stochastic, the diffusion coefficient in the state equation includes control variables, and the weight matrix in front of the control variables in the cost functional is not necessarily positive definite.
    Bensoussan et al. \cite{Bensoussan-Chen-Sethi-15} introduced several solution concepts based on players' information sets and investigated LQ Stackelberg differential games under adaptive open-loop and closed-loop memoryless information structures, where control variables do not enter the diffusion coefficient in the state equation.
    Shi et al. \cite{Shi-Wang-Xiong-16} is concerned with a leader-follower stochastic differential game with asymmetric information.
    Zheng and Shi \cite{Zheng-Shi-20} investigated a Stackelberg game involving {\it backward stochastic differential equations} (BSDEs) (Pardoux and Peng \cite{Pardoux-Peng-90}, Ma and Yong \cite{Ma-Yong-99}).
    Feng et al. \cite{Feng-Hu-Huang-22} examined a Stackelberg game associated with BSDE featuring constraints.
    Sun et al. \cite{Sun-Wang-Wen-23} conducted research on a zero-sum LQ Stackelberg stochastic differential game.
    Xiang and Shi \cite{Xiang-Shi-24} concerned with a two-person zero-sum indefinite stochastic LQ Stackelberg differential game with asymmetric informational uncertainties, where both the leader and follower face different and unknown disturbances.

    {\it Stackelberg MF games}, distinct from Stackelberg stochastic differential games of MF type (incorporating the expected values of state and control variables, as seen in references \cite{Du-Wu-19}, \cite{Lin-Jiang-Zhang-19}, \cite{Wang-Zhang-20}, \cite{Moon-Yang-21}, \cite{Lin-Zhang-23}, etc.), have been increasingly capturing the attention of researchers.
    Nourian et al. \cite{Nourian-Caines-Malhami-Huang-12} studied a large population LQ leader-follower stochastic multi-agent systems and established their $(\epsilon_1, \epsilon_2)$-Stackelberg-Nash equilibrium.
    Bensoussan et al. \cite{Bensoussan-Chau-Yam-15} and Bensoussan et al. \cite{Bensoussan-Chau-Lai-Yam-17} investigated Stackelberg MF games featuring delayed responses.
    Wang and Zhang \cite{Wang-Zhang-14} examined hierarchical games for multi-agent systems involving a leader and a large number of followers with infinite horizon tracking-type costs.
    Moon and Ba\c{s}ar \cite{Moon-Basar-18} considered the LQ Stackelberg MF games with the adapted open-loop information structure, and derived $(\epsilon_1, \epsilon_2)$-Stackelberg-Nash equilibrium.
    Yang and Huang \cite{Yang-Huang-21} conducted a study on LQ Stackelberg MF games involving a major player (leader) and $N$ minor players (followers).
    Si and Wu \cite{Si-Wu-21} explored a backward-forward LQ Stackelberg MF games, where the leader's state equation is backward, and the followers' state equation is forward.
    A static output feedback strategy for robust incentive Stackelberg games with a large population for MF stochastic systems was investigated in Mukaidani et al. \cite{Mukaidani-Irie-Xu-Zhuang-22}.
    Dayanikli and Lauri\`{e}re \cite{Dayanikli-Lauriere-23} proposed a numerical approach of machine learning techniques to solve Stackelberg problems between a principal and a MF of agents.
    Wang \cite{Wang-24} employed a direct method to solve LQ Stackelberg MF games with a leader and a substantial number of followers.
    Cong and Shi \cite{Cong-Shi-24a} delve into backward-forward stochastic systems LQ Stackelberg MF games by direct approach.
    Si and Shi \cite{Si-Shi-24} concerned with an LQ Stackelberg MF games with partial information and common noise.

    Conventionally, two approaches are employed in the resolution of MF games.
    One is termed the fixed-point approach (or top-down approach, NCE approach, see \cite{Huang-Caines-Malhame-07}, \cite{Huang-Malhame-Caines-06}, \cite{Li-Zhang-08}, \cite{Bensoussan-Frehse-Yam-13}, \cite{Carmona-Delarue-18}), which initiates the process by employing MF approximation and formulating a fixed-point equation.
    By tackling the fixed-point equation and scrutinizing the optimal response of a representative player, decentralized strategies can be formulated.
    The alternative approach is known as the direct approach (or bottom-up approach, refer to \cite{Lasry-Lions-07}, \cite{Wang-Zhang-Zhang-20}, \cite{Huang-Zhou-20}, \cite{Wang-Zhang-Zhang-22}, \cite{Wang-24}, \cite{Cong-Shi-24a}, \cite{Cong-Shi-24b}).
    This method commences by formally solving an $N$-player game problem within a vast and finite population setting.
    Subsequently, by decoupling or reducing high-dimensional systems, centralized control can be explicitly derived, contingent on the state of a specific player and the average state of the population.
    As the population size $N$ approaches infinity, the construction of decentralized strategies becomes feasible.
    In \cite{Huang-Zhou-20}, the authors addressed the connection and difference of these two routes in an LQ setting.
    However, whether employing the fixed-point method or the direct method, the resulting decentralized strategies are asymptotic.
    When the number of participants, $N$, is a finite large number, the conclusions drawn from the aforementioned methods can become significantly erroneous, thus becoming ineffective.
    A new method has been introduced to address this issue, see \cite{Wang-Zhang-Zhang-22}, \cite{Wang-Zhang-Fu-Liang-23} and \cite{Liang-Wang-Zhang-24}, taking the conditional expectation with respect to the information filter adapted to the $i$th agent's decentralized control set and de-aggregating the MF term to obtain a decentralized strategy.
    The set of decentralized strategies is an exact Nash equilibrium with respect to decentralized control set, and is applicable for arbitrary number of agents.
    We refer to this method as {\it de-aggregation method}.

    In this paper, we explore LQ Stackelberg MF games and teams problem.
    The leader initiates the process by disclosing their strategy, following which each follower optimizes its individual cost.
    Employing the de-aggregation method, we formulate a decentralized Stackelberg-Nash equilibrium and a decentralized Stackelberg-team equilibrium.
    With the leader's strategy given, we first address MF games or teams for followers by the variational analysis, resulting in a system of FBSDEs.
    Due to accessible in formation restriction, the decentralized strategies of followers are given by the conditional expectation of costates.
    By using the de-aggregation method, we represent the MF term $x^{(N)}$ in the form of linear combination of the $i$ agent's state $x_i$ and its expectation.
    Then, we represent the conditional expectation of costates as the form of linear combination of the $i$ agent's state $x_i$ and its expectation, and by comparing coefficients, we obtain the decentralized optimal strategies and Riccati equations for followers.
    Subsequent to the followers implementing their strategies, the leader encounters an optimal control problem.
    By variational analysis, we obtain the decentralized strategy for the leader.
    By decoupling a high-dimensional FBSDE with the de-aggregation method and dimension expansion, we construct a set of decentralized Stackelberg-Nash or Stackelberg-team equilibrium strategies.

    The main contributions of the paper are outlined as follows.

    $\bullet$ We summarize and propose the {\it de-aggregation method} for MF games, which is distinct from the traditional fixed-point method and direct method. This new approach offers an alternative way to handle the complexities inherent in MF games theory.
    Unlike fixed-point method and direct method, which yield decentralized asymptotically optimal strategies relative to the centralized strategy set, only applicable for the case of sufficiently large number of agents, the strategies generated by the de-aggregation method are exact decentralized optimal strategies relative to the decentralized strategy set and are applicable for arbitrary number of agents.

    $\bullet$ We embrace the de-aggregation approach to tackle the LQ Stackelberg MF games and teams problem, which differs from the fixed-point method used in articles like \cite{Moon-Basar-18} and \cite{Si-Wu-21}, etc., and the direct method employed in articles such as \cite{Wang-24} and \cite{Cong-Shi-24a}, etc.. The decentralized Stackelberg-Nash or Stackelberg-team equilibrium strategies are constructed.

    The paper is structured as follows.
    In Section 2, we formulate the problem of LQ Stackelberg MF games and teams.
    In Section 3, we design decentralized Stackelberg-Nash equilibrium in two parts.
    Initially, we take the leader's strategy as given and tackle the game problem for the followers.
    Subsequently, we deal with the leader's optimal control issue.
    In Section 4, following a similar approach to Section 3, we design decentralized Stackelberg-team equilibrium.
    Section 5 engages in a numerical simulation.
    Finally, some conclusions and future research directions are given in Section 6.

    The following notations will be used throughout this paper.
    We use $||\cdot||$ to denote the norm of a Euclidean space, or the Frobenius norm for matrices.
    The superscript $\top$ denotes the transpose of a vector or matrix.
    For a symmetric matrix $Q$ and a vector $z$, $||z||^2_Q \equiv z^\top Qz$, and $Q > 0$ $(Q \geq 0)$ means that $Q$ is positive definite (positive semi-definite).
    For two vectors $x,y$, $\langle x,y \rangle = x^\top y$.
    Let $T>0$ be a finite time duration, $C([0,T],\mathbb{R}^n)$ is the space of all $\mathbb{R}^n$-value continuous functions defined on $[0,T]$.
    Let $(\Omega, \mathcal{F}, \{\mathcal{H}_t\}_{0 \le t \le T}, \mathbb{P})$ be a complete filtered probability space with the filtration $\{\mathcal{H}_t\}_{0 \le t \le T}$ augmented by all the $\mathbb{P}$-null sets in $\mathcal{F}$.
    $\mathbb{E}[\cdot]$ denoted the expectation with respect to $\mathbb{P}$.
    Let $\mathcal{G}_t$ be some sub-$\sigma$-algebra of $\mathcal{H}_t$, $\mathbb{E}[\cdot|\mathcal{G}_t]$ denoted the conditional expectation with respect to filter $\mathcal{G}_t$.
    Let $L_{\mathcal{H}}^2(0,T;\cdot)$ be the set of all vector-valued (or matrix-valued) $\mathcal{H}_t$-adapted processes $f(\cdot)$ such that $\mathbb{E}\big[\int^T_0 ||f(t)||^2dt\big] < \infty$ and $L^2_{\mathcal{H}_t}(\Omega;\cdot)$ be the set of $\mathcal{H}_t$-measurable random variables, for $t\in[0,T]$.

    \section{Problem formulation}

    This paper presents a multi-agent system, elucidating the roles of one leader and $N$ exchangeable followers within its structure.
    Players are termed ``exchangeable" when their dynamics and cost functions do not change regardless of the indexing applied.
    Unlike previous studies, here $N$ can be arbitrarily large or a finite number.
    The states equation of the leader and the $i$th follower, $1 \le i \le N$, are given by the following controlled linear SDEs, respectively:
    \begin{equation}\label{state}
        \left\{
        \begin{aligned}
            dx_0(t) &= \big[A_0x_0(t) + B_0u_0(t) + f_0(t)\big]dt + D_0dW_0(t),\\
            dx_i(t) &= \big[Ax_i(t) + Bu_i(t) + f(t)\big]dt + DdW_i(t),\quad t\in[0,T],\\
            x_0(0) &= \xi_0 ,\quad x_i(0) = \xi_i , \quad i = 1,\cdots,N,
        \end{aligned}
        \right.
    \end{equation}
    where $x_0(\cdot) \in \mathbb{R}^n$, $u_0(\cdot)\in \mathbb{R}^m$ are the state process and the control process, $\xi_0$ is the initial value of the leader; similarly, $x_i(\cdot) \in \mathbb{R}^n$, $u_i(\cdot) \in \mathbb{R}^m$ and $\xi_i$ are the state process, control process and initial value of the $i$th follower.
    Here, $A_0, B_0, D_0, A, B, D$ are constant matrices with appropriate dimension, the non-homogeneous term $f_0, f \in C([0,T], \mathbb{R}^n)$.
    $W_i(\cdot), i = 0,\cdots,N$ are a sequence of one-dimensional Brownian motions defined on $(\Omega, \mathcal{F}, \{\mathcal{F}_t\}_{0 \le t \le T}, \mathbb{P})$.
    Let $\mathcal{F}_t$ be the $\sigma$-algebra generated by $\{\xi_i, W_i(s), s \le t, 0 \le i \le N\}$.
    Denote $\mathcal{F}_t^i$ be the sub-$\sigma$-algebra generated by $\{\xi_i, W_i(s), s \le t\}, 0 \le i \le N$.
    Define the decentralized control set for each agent as
    $$\mathscr{U}_{d,i}[0,T] := \big\{u_i(\cdot)|u_i(\cdot) \in L^2_{\mathcal{F}^i}(0,T;\mathbb{R}^m)\big\},\quad i = 0,\cdots,N.$$

    The cost functional of the leader and $i$th follower are given by
    \begin{equation}\label{cost of leader}
    \hspace{-0.3mm}    J^N_0(u_0(\cdot); u^N(\cdot)) = \frac{1}{2} \mathbb{E} \int_0^T \left[||x_0(t) - \Gamma_0x^{(N)}(t) - \eta_0(t)||^2_{Q_0} + ||u_0(t)||^2_{R_0}\right]dt,
    \end{equation}
    \begin{equation}\label{cost of followers}
        J^N_i(u_i(\cdot); u_{-i}(\cdot), u_0(\cdot))
        = \frac{1}{2} \mathbb{E} \int_0^T \left[||x_i(t) - \Gamma x^{(N)}(t) - \Gamma_1 x_0(t) - \eta(t)||^2_Q + ||u_i(t)||^2_R\right]dt,
        \end{equation}
    where $u^N(\cdot) := (u_1(\cdot),\cdots,u_N(\cdot))$, $u_{-i}(\cdot) := (u_1(\cdot),\cdots,u_{i-1}(\cdot),u_{i+1}(\cdot),\cdots,u_N(\cdot))$, $x^{(N)}(\cdot):= \frac{1}{N} \sum_{i=1}^{N} x_i(\cdot)$ is called the state average or MF term of all followers, $Q_0$, $R_0$, $Q$ and $R$ are symmetric constant matrices with appropriate dimension, $\Gamma_0$, $\Gamma$ and $\Gamma_1$ are constant matrices, and the non-homogeneous term $\eta_0, \eta \in C([0,T], \mathbb{R}^n)$.
    Notice that $Q_0$, $\Gamma_0$ and $\eta_0$ determine the coupling between the leader and the MF of the $N$ followers, $R_0$ is the control performance weighting parameter of the leader.
    $Q$, $\Gamma$, $\Gamma_1$ and $\eta$ determine the coupling between the $i$th follower, leader and followers' MF.
    Also, $R$ serves as the control performance weighting parameter of the $i$th follower.
    It is noteworthy that the followers are (weakly) coupled with each other through the MF term $x^{(N)}$, and are (strongly) coupled with the leader's state $x_0$ included in their cost functionals.

    \begin{Remark}
        The MF term $x^{(N)}(\cdot)$ here is widely used in classical MF games, and a more general weighted form $x^{(\alpha)}(\cdot) = \sum_{i=1}^{N} \alpha_i^{(N)} x_i(\cdot)$, where $\alpha_i^{(N)} \ge 0, i=1,\cdots,N$, $\sum_{i=1}^{N} \alpha_i^{(N)}=1$, can also be considered. For simplicity, we only discuss $x^{(N)}(\cdot)$ in this context.
    \end{Remark}

    We posit the following assumptions:

    \noindent {\bf (A1)} $\{\xi_i\}, i = 1,2,\cdots,N$ are a sequence of i.i.d. random variables, $\xi_0$ is a random variable independent of $\{\xi_i\}, i = 1,2,\cdots,N$,
    with $\mathbb{E}[\xi_i] = \bar{\xi}, i = 1,2,\cdots,N$, $\mathbb{E}[\xi_0] = \bar{\xi_0}$, and there exists a constant $c$ such that $\sup_{0 \le i \le N} \mathbb{E} [||\xi_i||^2] \le c.$

    \noindent {\bf (A2)} $\{W_i(t), 0 \le i \le N \}$ are independent of each other, which are
    also independent of $\{\xi_i, 0 \le i \le N \}$.

    \noindent {\bf (A3)} $Q \ge 0, R > 0$.

    \noindent {\bf (A4)} $Q_0 \ge 0, R_0 > 0$.

    Now, we present the precise definition of a decentralized Stackelberg-Nash equilibrium and a decentralized Stackelberg-team equilibrium.

    \begin{mydef}\label{decentralized Stackelberg-Nash equilibrium}
        A set of strategies $(u_0^*(\cdot),u_1^*(\cdot),\cdots,u_N^*(\cdot))$ is a decentralized Stackelberg-Nash equilibrium with respect to $\{J^N_i,0 \le i \le N\}$ if the following hold:

        (i) For a given strategy of the leader $u_0(\cdot) \in \mathscr{U}_{d,0}[0,T]$, $u^{N*}(\cdot) = (u_1^*(\cdot),\cdots,u_N^*(\cdot))$ constitutes a Nash equilibrium, if for all $i, 1 \le i \le N$,
        $$J^N_i(u_i^*(\cdot); u_{-i}^*(\cdot), u_0(\cdot)) = \inf_{u_i(\cdot) \in \mathscr{U}_{d,i}[0,T]} J^N_i(u_i(\cdot); u_{-i}^*(\cdot), u_0(\cdot));$$

        (ii)
        $$J^N_0(u_0^*(\cdot); u^{N*}[\cdot;u_0^*(\cdot)]) = \inf_{u_0(\cdot) \in \mathscr{U}_{d,0}[0,T]} J^N_0(u_0(\cdot); u^{N*}[\cdot;u_0(\cdot)]).$$
    \end{mydef}

    \begin{mydef}\label{decentralized Stackelberg-team equilibrium}
        A set of strategies $(u_0^*(\cdot),u_1^*(\cdot),\cdots,u_N^*(\cdot))$ is a decentralized Stackelberg-team equilibrium with respect to $\{J^N_i,0 \le i \le N\}$ if the following hold:

        (i) For a given strategy of the leader $u_0(\cdot) \in \mathscr{U}_{d,0}[0,T]$, $u^{N*}(\cdot) = (u_1^*(\cdot),\cdots,u_N^*(\cdot))$ constitutes a social optimal solution, if
        $$J^N_{soc}(u^{N*}(\cdot); u_0(\cdot)) = \inf_{u_i(\cdot) \in \mathscr{U}_{d,i}[0,T], 1 \le i \le N} J^N_{soc}(u^N(\cdot); u_0(\cdot)),$$
        where $J^N_{soc}(u^N) := \frac{1}{N} \sum_{i=1}^{N} J^N_i(u_i(\cdot); u_{-i}(\cdot), u_0(\cdot));$

        (ii)
        $$J^N_0(u_0^*(\cdot); u^{N*}[\cdot;u_0^*(\cdot)]) = \inf_{u_0(\cdot) \in \mathscr{U}_{d,0}[0,T]} J^N_0(u_0(\cdot); u^{N*}[\cdot;u_0(\cdot)]).$$
    \end{mydef}

    In this paper, we investigate the following problems.

    \noindent {\bf (PG)}: Find a decentralized Stackelberg-Nash equilibrium solution $(u^*_i(\cdot) \in \mathscr{U}_{d,i}[0,T], i = 0,\cdots,N)$ to (\ref{cost of leader}), (\ref{cost of followers}), subject to (\ref{state}).

    \noindent {\bf (PS)}: Find a decentralized Stackelberg-team equilibrium solution $(u^*_i(\cdot) \in \mathscr{U}_{d,i}[0,T], i = 0,\cdots,N)$ to (\ref{cost of leader}), (\ref{cost of followers}), subject to (\ref{state}).

    We need to point out that the methods for handling the two problems are highly similar. We will mainly discuss Problem {\bf (PG)}, with corresponding conclusions provided for Problem {\bf (PS)}.

    Based on the model in \cite{Carmona-Dayanikli-Lauriere-22}, we introduce a carbon emissions problem as a case study to elucidate the motivational underpinnings and practical context of Problem {\bf (PG)} and {\bf (PS)}.
    \begin{example}
        Considering the carbon emissions model, which comprises $N$ electricity productions, and a regulator.

        For symmetry reasons, we assume that the total electricity demand is split equally between all the electricity production agents, and each agent faces the same demand, say $D$.
        The dynamical system for producer $i$ is delineated by the following equations:
        \begin{equation*}
            \left\{
            \begin{aligned}
                dQ_i(t) =& c_1 \mathcal{N}_i(t)dt + c_2dS_i(t),\\
                dS_i(t) =& (\theta - S_i(t))dt + \sigma dW_i(t),\\
                dE_i(t) =& \delta \mathcal{N}_i(t)dt,\\
                d\tilde{\mathcal{N}}_i(t) =& \mathcal{N}_i(t)dt.
            \end{aligned}
            \right.
        \end{equation*}
        The first equation simply states that the instantaneous electricity production $Q_i(t)$ changes depend on the change rate of instantaneous nonrenewable energy usage $\mathcal{N}_i(t)$ and the change rate of instantaneous yield from the renewable energy investment $dS_i(t)$, where $c_1,c_2>0$ are constants that give the efficiency of the production from nonrenewable and renewable energy, respectively.
        We assume that $S_i(t), i=1,\cdots,N$ are Ornstein-Uhlenbeck processes with the same mean $\theta>0$ and volatility $\sigma>0$, the $W_i(t)$ being independent Wiener processes.
        The dynamics of the instantaneous emissions $E_i(t)$ are determined by the third equation.
        The choice of the constant $\delta$ could include the effects of some abatement measures such as carbon capture, sequestration and the use of filters.
        The instantaneous nonrenewable energy production $\tilde{\mathcal{N}}_i(t)$ is given by the fourth equation.
        Producer $i$ controls their state by choosing the rate of change $\mathcal{N}_i(t)$ in nonrenewable energy production.
        The expected cost to producer $i$ over the whole period is:
        \begin{equation*}
            \begin{aligned}
            J^N_i(\mathcal{N}_i(\cdot), \mathcal{N}_{-i}(\cdot)) = \frac{1}{2} \mathbb{E} \int_0^T &\left[ \alpha_1 \Big| Q_i(t)-D \Big|^2 - \alpha_2 \left( \rho_0 + \rho_1 \left( D - Q^{(N)}(t) \right)\right) Q_i(t) \right.\\
            & \left.+ \tau(t) E_i(t) + \alpha_3 \Big| \mathcal{N}_i(t)\Big|^2 + \alpha_4 \tilde{\mathcal{N}}_i(t) \right]dt,
            \end{aligned}
        \end{equation*}
        where $Q^{(N)}(\cdot):= \frac{1}{N} \sum_{i=1}^{N} Q_i(\cdot)$.
        Term 1 with $\alpha_1 > 0$ imposes a penalty on producers for not matching the demand and forcing the system operator to use costly reserves.
        Term 2 represents the revenues from electricity production, $\left( \rho_0 + \rho_1 \left( D - Q^{(N)}(t) \right)\right)$ being the inverse demand function which is assumed to be linear in excess demand or supply, where $\rho_0$ and $\rho_1$ are strictly positive constants.
        It captures the fact that the price increases if there is excess demand, and it decreases if there is excess supply.
        This term introduces the MF interactions into the model.
        Term 3 gives the pollution damage cost for the producer.
        This cost is levied by the regulator by using a carbon tax.
        Term 4 with $\alpha_3 > 0$ is a penalty (i.e., delay cost) for attempting to ramp up and down nonrenewable energy power plants too quickly.
        Term 5 represents the costs of the fossil fuels used in nonrenewable power plants.
        The constant $\alpha_4 > 0$ can be understood as the average cost of one unit of fossil fuel.

        The dynamical system for the regulator is delineated by the following equations:
        \begin{equation*}
            d\tau(t) = u(t) dt + \sigma_0 dW_0(t).
        \end{equation*}
        Regulator adjust the amount of carbon tax by controlling $u(t)$ to minimize their own cost:
        \begin{equation*}
            J^N_0(u(\cdot), \mathcal{N}^{(N)}(\cdot)) = \frac{1}{2} \mathbb{E} \int_0^T \left[ \beta_1 \Big| NE^{(N)}(t) \Big|^2 - \beta_2 \tau(t) E_i(t) + \beta_3 \Big| D - Q^{(N)}(t) \Big|^2  + \beta_4 \Big| u(t)\Big|^2 \right]dt,
        \end{equation*}
        where $\beta_1,\beta_2,\beta_3$ and $\beta_4$ are nonnegative constants whose role is explained below.
        The first term is the cost of total pollution.
        The second term is the revenue from the carbon tax.
        The roles of Term 3 is to ensure that the responsibility of matching the demand is not only incumbent on the producers, but also on the regulator, influencing the choice of $\beta_3$.
        This is consistent with the characterization of producers/regulator as a policy maker as well as a system operator bearing the brunt of managing the ancillary services to avoid disruptions like system blackouts.
        The fourth term represents a penalty for frequent policy fluctuations.

        In accordance with the carbon tax policy announced by the regulator, producers engage in games or cooperation to minimize their individual or social cost.
        Subsequently, the regulator selects a tax level that minimizes its own cost.
        Evidently, this problem can be construed as a LQ Stackelberg MF games and teams.

    \end{example}

    \section{LQ Stackelberg MF games}

    \subsection{MF games for the $N$ followers}

    In this subsection, we consider the multiagent Nash game for the $N$ followers under an arbitrary strategy of the leader $u_0(\cdot) \in \mathscr{U}_{d,0}[0,T]$.
    We suppose $u_0(\cdot)$ is fixed. Then, due to first equation of ({\ref{state}}), $x_0(\cdot)$ is also fixed.
    We now consider the following game problem for $N$ followers.

    \noindent {\bf (PG1)}: Minimize $J^N_i,i = 1,\cdots,N$ of (\ref{cost of followers}) over $u_i(\cdot) \in \mathscr{U}_{d,i}[0,T]$.

    For the sake of simplicity, the time variable $t$ will be omitted without ambiguity, and denote $\mathbb{E}_i[\cdot] := \mathbb{E}[\cdot|\mathcal{F}_t^i]$.

    We have the following result.

    \begin{mythm}\label{followers thm1}
        Under Assumption (A1)-(A2), let $u_0(\cdot) \in \mathscr{U}_{d,0}[0,T]$ be given, for the initial value $\xi_i, i = 1,\cdots,N$, {\bf (PG1)} admits an optimal control $\check{u}_i(\cdot) \in \mathscr{U}_{d,i}[0,T], i = 1,\cdots,N$, if and only if the following two conditions hold:

        (i) The adapted solution $\left( \check{x}_i(\cdot), \check{p}_i(\cdot), \check{q}^j_i(\cdot), i = 1,\cdots,N, j = 0,1,\cdots,N \right)$ to the FBSDE
        \begin{equation}\label{adjoint FBSDE of followers}
            \left\{
            \begin{aligned}
                d\check{x}_i &= \big[A\check{x}_i + B\check{u}_i + f\big]dt + DdW_i,\\
                d\check{p}_i &= -\left[A^\top \check{p}_i + \left(I - \frac{\Gamma}{N}\right)^\top Q\left(\check{x}_i - \Gamma \check{x}^{(N)} - \Gamma_1 x_0 - \eta\right)\right]dt + \sum_{j=0}^{N} \check{q}_i^j dW_j,\quad t\in[0,T],\\
                \check{x}_i(0) &= \xi_i ,\quad \check{p}_i(T) = 0 , \quad i = 1,\cdots,N,
            \end{aligned}
            \right.
        \end{equation}
        satisfies the following stationarity condition:
        \begin{equation}\label{stationarity condition of followers}
            B^\top \mathbb{E}_i[\check{p}_i] + R\check{u}_i = 0, \quad i = 1,\cdots,N.
        \end{equation}

        (ii) For $i = 1,\cdots,N$, the following convexity condition holds:
        \begin{equation}\label{convexity condition of followers}
            \mathbb{E} \int_0^T \left[ ||\left(I - \frac{\Gamma}{N}\right) \tilde{x}_i||^2_Q + ||u_i||^2_R \right]dt \ge 0,\ i = 1,\cdots,N, \quad \forall u_i(\cdot) \in \mathscr{U}_{d,i}[0,T],
        \end{equation}
        where $\tilde{x}_i(\cdot)$ is the solution to the following {\it random differential equation} (RDE):
        \begin{equation}\label{random differential equation of followers}
            \left\{
            \begin{aligned}
                &d\tilde{x}_i = [A\tilde{x}_i + Bu_i]dt,\\
                &\tilde{x}_i(0) = 0.
            \end{aligned}
            \right.
        \end{equation}
    \end{mythm}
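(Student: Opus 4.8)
The plan is to treat \textbf{(PG1)} as a standard convex stochastic LQ control problem for follower $i$, with $u_0(\cdot)$ and the equilibrium controls $\check u_{-i}(\cdot)$ of the other followers frozen, and to derive the stated characterization by a first/second-order variational (duality) argument. Once $u_{-i}$ and $u_0$ are fixed, $J^N_i(\cdot;\check u_{-i},u_0)$ is an inhomogeneous quadratic functional of $u_i$ alone: perturbing $u_i=\check u_i+\varepsilon v_i$ with $v_i\in\mathscr{U}_{d,i}[0,T]$, only $x_i$ moves among the follower states, $x_i^\varepsilon=\check x_i+\varepsilon\tilde x_i$ with $\tilde x_i$ solving the RDE \eqref{random differential equation of followers} (the diffusion $DdW_i$ is control-free, so the variation has no martingale part), and correspondingly $x^{(N),\varepsilon}=\check x^{(N)}+\tfrac{\varepsilon}{N}\tilde x_i$. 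Hence the argument $z_i:=x_i-\Gamma x^{(N)}-\Gamma_1 x_0-\eta$ of the first weight term varies by $\delta z_i=(I-\tfrac{\Gamma}{N})\tilde x_i$, which is exactly where the factor $I-\Gamma/N$ in \eqref{adjoint FBSDE of followers}--\eqref{convexity condition of followers} comes from.

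Next I would expand, exactly, $J^N_i(\check u_i+\varepsilon v_i)=J^N_i(\check u_i)+\varepsilon\,\mathcal{I}_1(v_i)+\tfrac{\varepsilon^2}{2}\mathcal{I}_2(v_i)$, where $\mathcal{I}_1(v_i)=\mathbb{E}\int_0^T\big[\langle(I-\tfrac{\Gamma}{N})^\top Q(\check x_i-\Gamma\check x^{(N)}-\Gamma_1 x_0-\eta),\tilde x_i\rangle+\langle R\check u_i,v_i\rangle\big]dt$ and $\mathcal{I}_2(v_i)=\mathbb{E}\int_0^T\big[\|(I-\tfrac{\Gamma}{N})\tilde x_i\|^2_Q+\|v_i\|^2_R\big]dt$. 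To bring $\mathcal{I}_1$ into the form of the stationarity condition, introduce the costate $\check p_i$ via \eqref{adjoint FBSDE of followers} and apply It\^o's formula to $\langle\check p_i,\tilde x_i\rangle$ on $[0,T]$: since $\tilde x_i$ has finite variation the cross-variation term drops, the stochastic integral $\sum_j\int\langle\tilde x_i,\check q_i^j dW_j\rangle$ has zero expectation, and $\check p_i(T)=0$, $\tilde x_i(0)=0$ kill the boundary terms, giving $\mathbb{E}\int_0^T\langle(I-\tfrac{\Gamma}{N})^\top Q(\cdots),\tilde x_i\rangle dt=\mathbb{E}\int_0^T\langle B^\top\check p_i,v_i\rangle dt$. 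Substituting, $\mathcal{I}_1(v_i)=\mathbb{E}\int_0^T\langle B^\top\check p_i+R\check u_i,v_i\rangle dt$, and since $v_i$ and $\check u_i$ are $\mathcal{F}^i_t$-adapted, the tower property replaces $B^\top\check p_i$ by $B^\top\mathbb{E}_i[\check p_i]$, i.e. $\mathcal{I}_1(v_i)=\mathbb{E}\int_0^T\langle B^\top\mathbb{E}_i[\check p_i]+R\check u_i,v_i\rangle dt$; this conditioning is the imprint of the decentralized information constraint.

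Finally I would close with the usual convex-analysis dichotomy. For sufficiency: given (i) and (ii), for every $v_i$ (take $\varepsilon=1$, the expansion being exact) $J^N_i(\check u_i+v_i)=J^N_i(\check u_i)+\mathcal{I}_1(v_i)+\tfrac12\mathcal{I}_2(v_i)=J^N_i(\check u_i)+\tfrac12\mathcal{I}_2(v_i)\ge J^N_i(\check u_i)$, using \eqref{stationarity condition of followers} to annihilate $\mathcal{I}_1$ and \eqref{convexity condition of followers} for $\mathcal{I}_2(v_i)\ge0$, so $\check u_i$ is optimal for each $i$, hence a Nash point. For necessity: optimality of $\check u_i$ forces $\varepsilon\mathcal{I}_1(v_i)+\tfrac{\varepsilon^2}{2}\mathcal{I}_2(v_i)\ge0$ for all real $\varepsilon$ and all $v_i$; letting $\varepsilon\to0^{\pm}$ yields $\mathcal{I}_1(v_i)=0$ for all $v_i\in\mathscr{U}_{d,i}[0,T]$, equivalent by the fundamental lemma of the calculus of variations (applied to the $\mathcal{F}^i_t$-adapted pairing) to \eqref{stationarity condition of followers}, and then $\mathcal{I}_2(v_i)\ge0$ gives \eqref{convexity condition of followers}. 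I do not expect a genuine obstacle here — the content is a textbook convex Pontryagin argument — so the delicate part is bookkeeping: correctly propagating the $1/N$ weight of $x^{(N)}$ into the factor $I-\Gamma/N$ in both the adjoint drift and the convexity quadratic form, and noting that it is precisely the $\mathcal{F}^i_t$-measurability of $v_i$ that legitimizes the passage from $\check p_i$ to $\mathbb{E}_i[\check p_i]$. Note also that the statement presupposes the existence of an adapted solution to the coupled FBSDE \eqref{adjoint FBSDE of followers}, so no well-posedness need be established; only the equivalence is at stake.
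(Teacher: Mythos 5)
Your proposal is correct and follows essentially the same route as the paper's proof: the same perturbation $u_i=\check u_i+\varepsilon v_i$ with variation $\tilde x_i$ solving the RDE, the same It\^o duality applied to $\langle\check p_i,\tilde x_i\rangle$, the same exact quadratic expansion of $J^N_i$, and the same first/second-order dichotomy. If anything you are slightly more explicit than the paper about the tower-property step that turns $\mathbb{E}\int_0^T\langle B^\top\check p_i+R\check u_i,v_i\rangle\,dt=0$ for all $\mathcal{F}^i_t$-adapted $v_i$ into the conditioned stationarity condition $B^\top\mathbb{E}_i[\check p_i]+R\check u_i=0$.
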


    \begin{proof}
        We consider the $i$th follower.
        For given $\xi_i$, $u_0(\cdot) \in \mathscr{U}_{d,0}[0,T]$, $\check{u}_i(\cdot) \in \mathscr{U}_{d,i}[0,T]$, suppose that $(\check{x}_i(\cdot), \check{p}_i(\cdot), \check{q}^j_i(\cdot), j = 0,1,\cdots,N)$ is an adapted solution to FBSDE (\ref{adjoint FBSDE of followers}).
        For any $u_i(\cdot) \in \mathscr{U}_{d,i}[0,T]$ and $\varepsilon \in \mathbb{R}$, let $x_i^{\varepsilon}(\cdot)$ be the solution to the following perturbed state equation:
        \begin{equation*}
            \left\{
            \begin{aligned}
                dx_i^{\varepsilon} &= \big[Ax_i^{\varepsilon} + B(\check{u}_i + \varepsilon u_i) + f\big]dt + DdW_i,\\
                x_i^{\varepsilon}(0) &= \xi_i.
            \end{aligned}
            \right.
        \end{equation*}
        Then, $\tilde{x}_i(\cdot) := \frac{x_i^{\varepsilon}(\cdot) - \check{x}_i(\cdot)}{\varepsilon}$ is independent of $\varepsilon$ and satisfies (\ref{random differential equation of followers}).

        Applying It\^{o}'s formula to $\langle \check{p}_i(\cdot), \tilde{x}_i(\cdot) \rangle$, integrating from 0 to $T$, and taking the expectation, we have
        \begin{equation*}
        \begin{aligned}
            0 &= \mathbb{E} \big[\langle \check{p}_i(T), \tilde{x}_i(T) \rangle - \langle \check{p}_i(0), \tilde{x}_i(0) \rangle \big] \\
            &= \mathbb{E} \int_0^T \left[ -\left\langle A^\top \check{p}_i + \left(I - \frac{\Gamma}{N}\right)^\top Q\left(\check{x}_i - \Gamma \check{x}^{(N)}
              - \Gamma_1 x_0 - \eta\right), \tilde{x}_i \right\rangle + \left\langle \check{p}_i, A\tilde{x}_i + Bu_i \right\rangle \right]dt \\
            &= \mathbb{E} \int_0^T \left[ -\left\langle \left(I - \frac{\Gamma}{N}\right)^\top Q\left(\check{x}_i - \Gamma \check{x}^{(N)} - \Gamma_1 x_0 - \eta\right), \tilde{x}_i \right\rangle
             + \left\langle \check{p}_i, Bu_i \right\rangle \right]dt.
        \end{aligned}
        \end{equation*}
        Hence,
        \begin{equation*}
        \begin{aligned}
            &J^N_i(\check{u}_i(\cdot) + \varepsilon u_i(\cdot); u_{-i}(\cdot), u_0(\cdot)) - J^N_i(\check{u}_i(\cdot); u_{-i}(\cdot), u_0(\cdot))\\
            &= \frac{1}{2} \mathbb{E} \int_0^T \left[||x_i^{\varepsilon} - \Gamma x^{\varepsilon(N)} - \Gamma_1 x_0 - \eta||^2_Q + ||\check{u}_i + \varepsilon u_i||^2_R \right]dt \\
            &\quad - \frac{1}{2} \mathbb{E} \int_0^T \left[||\check{x}_i - \Gamma \check{x}^{(N)} - \Gamma_1 x_0 - \eta||^2_Q + ||\check{u}_i||^2_R \right]dt \\
            &= \frac{1}{2} \mathbb{E} \int_0^T \left[||\left(I - \frac{\Gamma}{N}\right)(\check{x}_i + \varepsilon \tilde{x}_i) - \Gamma x^{(N-1)}_{-i} - \Gamma_1 x_0 - \eta||^2_Q
             + ||\check{u}_i + \varepsilon u_i||^2_R \right]dt \\
            &\quad - \frac{1}{2} \mathbb{E} \int_0^T \left[||\left(I - \frac{\Gamma}{N}\right)\check{x}_i - \Gamma x^{(N-1)}_{-i} - \Gamma_1 x_0 - \eta||^2_Q + ||\check{u}_i||^2_R \right]dt \\
            &= \frac{1}{2} \varepsilon^2 \mathbb{E} \int_0^T \bigg[||\left(I - \frac{\Gamma}{N}\right) \tilde{x}_i||^2_Q + ||u_i||^2_R \bigg]dt \notag \\
            &\quad + \varepsilon \mathbb{E} \int_0^T \left[ \left\langle \left(I - \frac{\Gamma}{N}\right)^\top Q  \left(\check{x}_i - \Gamma \check{x}^{(N)}
             - \Gamma_1 x_0 - \eta\right), \tilde{x}_i \right\rangle + \left\langle R\check{u}_i, u_i\right\rangle \right]dt \\
            &= \frac{1}{2} \varepsilon^2 \mathbb{E} \int_0^T \bigg[||\left(I - \frac{\Gamma}{N}\right) \tilde{x}_i||^2_Q + ||u_i||^2_R \bigg]dt
             + \varepsilon \mathbb{E} \int_0^T \left\langle B^\top \check{p}_i + R\check{u}_i, u_i\right\rangle dt,
        \end{aligned}
        \end{equation*}
        where only here $x^{\varepsilon(N)}(\cdot) := \frac{x_i^{\varepsilon}(\cdot)}{N} + \frac{1}{N} \sum_{j \ne i} x_j(\cdot)$, $\check{x}^{(N)}(\cdot) := \frac{\check{x}_i(\cdot)}{N} + \frac{1}{N} \sum_{j \ne i} x_j(\cdot)$, and $x^{(N-1)}_{-i}(\cdot) = \frac{1}{N} \sum_{j \ne i} x_j(\cdot)$.
        Therefore,
        $$J^N_i(\check{u}_i(\cdot); u_{-i}(\cdot), u_0(\cdot)) \le J^N_i(\check{u}_i(\cdot) + \varepsilon u_i(\cdot); u_{-i}(\cdot), u_0(\cdot))$$
        if and only if (\ref{stationarity condition of followers}) and (\ref{convexity condition of followers}) hold. The proof is complete.
    \end{proof}

    \begin{Remark}
        Assumption (A3) is a special case of the convexity condition presented in equation (\ref{convexity condition of followers}).
        Due to its ease of verification, this form of the convexity condition, as encapsulated in Assumption (A3), is widely adopted in the literature.
    \end{Remark}

    Based on (A3), we can figure out that the open-loop optimal strategies for followers are
    \begin{equation}\label{open-loop optimal strategies of followers}
        \check{u}_i = -R^{-1} B^\top \mathbb{E}_i[\check{p}_i],\quad i = 1,\cdots,N.
    \end{equation}
    Thus, the pertinent Hamiltonian system can be formulated as
    \begin{equation}\label{Hamiltonian system of followers}
        \left\{
        \begin{aligned}
            d\check{x}_i &= \big[A\check{x}_i - BR^{-1}B^\top \mathbb{E}_i[\check{p}_i] + f\big]dt + DdW_i,\\
            d\check{p}_i &= -\left[A^\top \check{p}_i + \left(I - \frac{\Gamma}{N}\right)^\top Q \left(\check{x}_i - \Gamma \check{x}^{(N)} - \Gamma_1 x_0 - \eta\right)\right]dt + \sum_{j=0}^{N} \check{q}_i^j dW_j,\\
            \check{x}_i(0) &= \xi_i ,\quad \check{p}_i(T) = H\check{x}_i(T) , \quad i = 1,\cdots,N.
        \end{aligned}
        \right.
    \end{equation}

    Due to the appearance of the $\mathbb{E}_i[\check{p}_i]$ term in (\ref{open-loop optimal strategies of followers}), we need to find the equation of $\mathbb{E}_i[\check{p}_i]$. Before that, we will introduce the following content.

    According to (\ref{state}), it can be inferred that $\check{x}_i$ is adapted to $\mathcal{F}_t^i, i = 0,\cdots,N.$
    And $N$ followers are exchangeable, by taking conditional expectations $\mathbb{E}_i[\cdot], i = 1, \cdots,N$, we have
    \begin{equation}\label{conditional expectation for Fi}
        \begin{aligned}
            &\mathbb{E}_i[\check{x}_j] = \mathbb{E}[\check{x}_j] = \mathbb{E}[\check{x}_i], \quad 1 \le j \ne i \le N;\\
            &\mathbb{E}_i[x_0] = \mathbb{E}[x_0]; \quad \mathbb{E}_i[\check{x}_i] = \check{x}_i.
    \end{aligned}
    \end{equation}

    Then, the MF term $\check{x}^{(N)}$ can be de-aggregated into linear form of $\check{x}_i$ and the expectation of $\check{x}_i$ as
    \begin{equation}\label{xN conditional expectation for Fi}
        \mathbb{E}_i[\check{x}^{(N)}] = \frac{1}{N} \check{x}_i + \frac{1}{N} \sum_{1 \le j \ne i \le N} \mathbb{E}[\check{x}_j] = \frac{1}{N} \check{x}_i + \frac{N-1}{N} \mathbb{E}[\check{x}_i],
    \end{equation}
    where
    \begin{equation}\label{xi expectation}
        d\mathbb{E}[\check{x}_i] = [A\mathbb{E}[\check{x}_i] - R^{-1} B^\top \mathbb{E}[\check{p}_i] + f]dt,\quad \mathbb{E}[\check{x}_i(0)] = \bar{\xi}.
    \end{equation}

    \begin{Remark}
        This part draws upon the methodologies presented in references \cite{Wang-Zhang-Zhang-22}, \cite{Wang-Zhang-Fu-Liang-23} and \cite{Liang-Wang-Zhang-24}.
        By taking the conditional expectation over the filtration adapted to the decentralized control set for the $i$-th agent, we have decomposed the MF aggregation into a linear form that encompasses both the state of the $i$-th agent and its expectation.
        This decomposition decouples the inter-agent coupling, independent of the number of agents $N$, thereby ensuring that the optimal strategy for any number of agents $N$ is an exact Nash equilibrium.
        We encapsulate this approach as the de-aggregation method.
    \end{Remark}

    Applying Lemma 5.4 in \cite{Xiong-08}, for $i = 1,\cdots,N$ we have
    \begin{equation}\label{qij conditional expectation for Fi}
        \begin{aligned}
            &\mathbb{E}_i \left[ \int_0^t \check{q}_i^i dW_i \right] = \int_0^t \mathbb{E}_i[\check{q}_i^i] dW_i,\\
            &\mathbb{E}_i \left[ \int_0^t \check{q}_i^j dW_j \right] = 0, \quad 0 \le j \ne i \le N.
        \end{aligned}
    \end{equation}
    We can provide the equation of $\mathbb{E}_i[\check{p}_i]$ below:
    \begin{equation}\label{pi conditional expectation for Fi}
    \left\{
        \begin{aligned}
            d\mathbb{E}_i[\check{p}_i] &= -\bigg[A^\top \mathbb{E}_i[\check{p}_i] + \left(I - \frac{\Gamma}{N}\right)^\top Q \bigg( \left(I - \frac{\Gamma}{N}\right) \check{x}_i - \frac{N-1}{N} \Gamma \mathbb{E}[\check{x}_i]\\
            &\qquad - \Gamma_1 \mathbb{E}[x_0] - \eta\bigg)\bigg]dt + \mathbb{E}_i[\check{q}_i^i] dW_i, \\
            \mathbb{E}_i[\check{p}_i(T)] &= 0.
    \end{aligned}
    \right.
    \end{equation}

    Consider the transformation
    \begin{equation}\label{transformation of adjoint equation}
        \mathbb{E}_i[\check{p}_i(\cdot)] = P_N(\cdot)\check{x}_i(\cdot) + K_N(\cdot)\mathbb{E}[\check{x}_i](\cdot) + \check{\phi}_N(\cdot), \quad i = 1,\cdots,N,
    \end{equation}
    where $P_N(\cdot)$, $K_N(\cdot)$ and $\check{\phi}_N(\cdot)$ are differential functions with $P_N(T) = 0$, $K_N(T) = 0$ and $\check{\phi}_N(T) = 0$.
    By It\^{o}'s formula, we get
    \begin{equation*}
    \begin{aligned}
        d\mathbb{E}_i[\check{p}_i] = &\left(\dot{P}_N \check{x}_i + \dot{K}_N \mathbb{E}[\check{x}_i] + \dot{\check{\phi}}_N \right) dt \\
        &+ P_N \big[A\check{x}_i - BR^{-1}B^\top \mathbb{E}_i[\check{p}_i] + f\big]dt + P_N DdW_i \\
        &+ K_N \left[A\mathbb{E}[\check{x}_i] - R^{-1} B^\top \mathbb{E}[\check{p}_i] + f \right]dt.
    \end{aligned}
    \end{equation*}
    Comparing the coefficients of the corresponding terms with (\ref{pi conditional expectation for
    Fi}), we have
    $$\mathbb{E}_i[\check{q}_i^i] = P_ND,$$
    \begin{equation}\label{Riccati-1 of followers}
        \dot{P}_N + A^\top P_N + P_NA - P_NBR^{-1}B^\top P_N + \left(I - \frac{\Gamma}{N}\right)^\top Q \left(I - \frac{\Gamma}{N}\right) = 0,\quad P_N(T) = 0,
    \end{equation}
    \begin{equation}\label{Riccati-2 of followers}
        \begin{aligned}
            &\dot{K}_N + A^\top K_N + K_NA - P_NBR^{-1}B^\top K_N - K_NBR^{-1}B^\top (P_N + K_N) \\
            &\qquad- \left(I - \frac{\Gamma}{N}\right)^\top Q \frac{N-1}{N}\Gamma = 0,\quad K_N(T) = 0,
        \end{aligned}
    \end{equation}
    \begin{equation}\label{BODE-1 of followers}
    \hspace{-3cm}    \begin{aligned}
            \dot{\check{\phi}}_N + \left[A^\top - (P_N + K_N)BR^{-1}B^\top \right] \check{\phi}_N + (P_N + K_N)f \\
            - \left(I - \frac{\Gamma}{N}\right)^\top Q (\Gamma_1 \mathbb{E}[x_0] + \eta) = 0,\quad \check{\phi}_N(T) = 0.
        \end{aligned}
    \end{equation}
    Let $\Pi_N(\cdot) := P_N(\cdot) + K_N(\cdot)$, then $\Pi_N(\cdot)$ satisfies
    \begin{equation}\label{Riccati-1+2 of followers}
        \dot{\Pi}_N + A^\top \Pi_N + \Pi_NA - \Pi_NBR^{-1}B^\top \Pi_N + \left(I - \frac{\Gamma}{N}\right)^\top Q \left( I - \Gamma \right) = 0,\quad \Pi_N(T) = 0,
    \end{equation}
    thus (\ref{BODE-1 of followers}) can be rewritten as
    \begin{equation}\label{BODE-2 of followers}
        \dot{\check{\phi}}_N + \left[A^\top - \Pi_N BR^{-1}B^\top \right] \check{\phi}_N + \Pi_N f - \left(I - \frac{\Gamma}{N}\right)^\top Q (\Gamma_1 \mathbb{E}[x_0] + \eta) = 0,\quad \check{\phi}_N(T) = 0.
    \end{equation}

    \begin{Remark}
        We can see that $P_N(\cdot)$, $K_N(\cdot)$ and $\check{\phi}_N(\cdot)$ are actually independent of $i$.
    \end{Remark}

    \begin{Remark}
        Note that (\ref{Riccati-1 of followers}) is a symmetric Riccati differential equation, if it satisfies (A3), then it admits a unique solution.
        Similarly, (\ref{Riccati-1+2 of followers}) is a symmetric Riccati differential equation, if it satisfies (A3) and $(1 - \frac{\Gamma}{N})^\top (1 - \Gamma) \ge 0$, then it admits a unique solution.
        Thus (\ref{Riccati-2 of followers}) admits a unique solution.
        And then (\ref{BODE-2 of followers}) admits a unique solution.
    \end{Remark}

    From the above discussion, Theorem \ref{followers thm1}, and Theorem 4.1 of \cite{Ma-Yong-99}, we have the following result.
    \begin{mythm}\label{followers thm2}
        Under Assumptions (A1)-(A3), and assume (\ref{Riccati-1 of followers}) and (\ref{Riccati-2 of followers}) admit a solution, respectively.
        Then, for given $u_0(\cdot) \in \mathscr{U}_{d,0}[0,T]$, Problem {\bf (PG1)} admits a unique solution
        \begin{equation}\label{optimal strategy of followers}
            \check{u}_i = -R^{-1} B^\top \left( P_N\check{x}_i + K_N\mathbb{E}[\check{x}_i] + \check{\phi}_N \right), \quad i = 1,\cdots,N,
        \end{equation}
        where
        $$d\check{x}_i = \left[(A - BR^{-1}B^\top P_N) \check{x}_i - BR^{-1}B^\top K_N \mathbb{E}[\check{x}_i] + f - BR^{-1}B^\top \check{\phi}_N \right] dt + DdW_i, \quad \check{x}_i(0) = \xi_i,$$
        $$d\mathbb{E}[\check{x}_i] = \left[(A - BR^{-1}B^\top \Pi_N) \mathbb{E}[\check{x}_i] + f - BR^{-1}B^\top \check{\phi}_N \right] dt, \quad \mathbb{E}[\check{x}_i(0)] = \bar{\xi}.$$
    \end{mythm}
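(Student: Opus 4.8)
The plan is to recognize that, under (A3), the convexity condition (\ref{convexity condition of followers}) from Theorem \ref{followers thm1} automatically holds (indeed the control term is strictly positive whenever $u_i\neq 0$), so that Theorem \ref{followers thm1} reduces the problem to exhibiting, for each $i$, an adapted solution $(\check x_i,\check p_i,\check q_i^j)$ of the FBSDE (\ref{adjoint FBSDE of followers}) together with the stationarity condition (\ref{stationarity condition of followers}), with $\check u_i$ given by (\ref{optimal strategy of followers}). Thus the proof is a verification argument resting on the de-aggregation identities (\ref{conditional expectation for Fi})--(\ref{qij conditional expectation for Fi}) and the ansatz (\ref{transformation of adjoint equation}).

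First I would build the candidate. Solvability of (\ref{Riccati-1 of followers}) and (\ref{Riccati-2 of followers}) gives a solution $\Pi_N=P_N+K_N$ of (\ref{Riccati-1+2 of followers}), and since (\ref{BODE-2 of followers}) is then a linear ODE with continuous coefficients it has a unique solution $\check\phi_N$. The equation for $\mathbb{E}[\check x_i]$ in the statement is a closed linear ODE, hence uniquely solvable; inserting it into the drift, the equation for $\check x_i$ is a linear SDE with bounded (continuous) coefficients and so has a unique strong $\mathcal F^i_t$-adapted solution. Taking expectations in that SDE and using $\mathbb{E}[\xi_i]=\bar\xi$ from (A1), uniqueness of linear ODEs shows $\mathbb{E}[\check x_i(t)]$ coincides with the function already constructed, so the notation is consistent and, by exchangeability, $\mathbb{E}[\check x_j]=\mathbb{E}[\check x_i]$ for all $j$. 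Define $\check u_i$ by (\ref{optimal strategy of followers}); it belongs to $\mathscr U_{d,i}[0,T]$. With all $\check x_j$ and the fixed leader state $x_0$ now determined processes, the backward equation in (\ref{adjoint FBSDE of followers}) is a \emph{linear} BSDE with known $\mathcal F_t$-adapted inhomogeneous term and zero terminal value, so by standard BSDE theory (\cite{Pardoux-Peng-90} and Theorem 4.1 of \cite{Ma-Yong-99}) it has a unique adapted solution $(\check p_i,\check q_i^0,\dots,\check q_i^N)$, the $\check q_i^j$ coming from martingale representation against $W_0,\dots,W_N$.

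It then remains to check (\ref{stationarity condition of followers}). Applying $\mathbb{E}_i[\cdot]$ to the backward equation for $\check p_i$, reducing $\mathbb{E}_i[\check x^{(N)}]$ and $\mathbb{E}_i[x_0]$ via (\ref{conditional expectation for Fi}) and (\ref{xN conditional expectation for Fi}), and killing the $W_j$-integrals for $j\neq i$ via (\ref{qij conditional expectation for Fi}), shows that $\mathbb{E}_i[\check p_i]$ solves the BSDE (\ref{pi conditional expectation for Fi}). Separately, plugging the ansatz (\ref{transformation of adjoint equation}) into It\^o's formula and matching the $dt$- and $dW_i$-coefficients against (\ref{pi conditional expectation for Fi}) is exactly the computation that yielded (\ref{Riccati-1 of followers})--(\ref{BODE-2 of followers}), so $P_N\check x_i+K_N\mathbb{E}[\check x_i]+\check\phi_N$ is also a solution of (\ref{pi conditional expectation for Fi}). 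That linear BSDE (driven by $W_i$ alone) has a unique adapted solution, hence the two agree, giving $B^\top\mathbb{E}_i[\check p_i]+R\check u_i=0$. By Theorem \ref{followers thm1}, $\check u_i$ is optimal for \textbf{(PG1)}, and uniqueness follows because $R>0$ makes $J_i^N(\,\cdot\,;u_{-i},u_0)$ strictly convex in $u_i$.

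The step I expect to be the main obstacle is precisely this last verification: the de-aggregation only determines $\mathbb{E}_i[\check p_i]$, not $\check p_i$ itself, so one must reconstruct the full adjoint process with all its martingale integrands as a genuine solution of the coupled FBSDE over all $N$ followers, and then confirm that the candidate produced by the Riccati system actually lifts to such a solution — equivalently, that the loop (construct $\mathbb{E}[\check x_i]$, then $\check x_i$, then $\check u_i$, then $\check p_i$, then recover $\mathbb{E}_i[\check p_i]$) closes consistently for every finite $N$. The remaining ingredients — well-posedness of the linear ODE/SDE/BSDE and strict convexity — are routine.
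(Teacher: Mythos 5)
Your proposal is correct and follows essentially the same route as the paper, whose ``proof'' is just the discussion preceding the theorem: convexity from (A3), the de-aggregation identities, the ansatz (\ref{transformation of adjoint equation}) with coefficient matching to get (\ref{Riccati-1 of followers})--(\ref{BODE-2 of followers}), and a citation of Theorem 4.1 of \cite{Ma-Yong-99} for well-posedness. You merely spell out the verification loop (candidate $\to$ BSDE for $\check{p}_i$ $\to$ conditional expectation $\to$ stationarity) that the paper leaves implicit; the only point treated more lightly than it deserves, in both your write-up and the paper's, is that strict convexity in $u_i$ gives uniqueness of each best response rather than of the Nash equilibrium itself, the latter resting on unique solvability of the decoupled FBSDE system.
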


    \begin{Remark}
        It is evident that the equation for $\mathbb{E}[\check{x}_i]$ here is consistent with the equation for the auxiliary function $\bar{x}$ obtained using other methods, like \cite{Cong-Shi-24a}.
    \end{Remark}

    \subsection{Optimal strategy of the leader}

    Upon implementing the strategies of followers $\check{u}_i(\cdot), i = 1,\cdots,N$ according to (\ref{optimal strategy of followers}), we delve into an optimal control problem for the leader.

    {\bf (PG2)}: Minimize $\check{J}^N_0(u_0(\cdot))$ over $u_0(\cdot) \in \mathscr{U}_{d,0}[0,T]$, where
    \begin{equation}\label{cost of leader-}
        \check{J}^N_0(u_0(\cdot)) := \frac{1}{2} \mathbb{E} \int_0^T \left[||x_0(t) - \Gamma_0\check{x}^{(N)}(t) - \eta_0||^2_{Q_0} + ||u_0(t)||^2_{R_0}\right]dt,
    \end{equation}
    subject to
    \begin{equation}\label{state of leader}
        \left\{
        \begin{aligned}
            dx_0 &= \left[A_0x_0 + B_0u_0 + f_0\right]dt + D_0dW_0,\\
            d\check{x}_i &= \left[(A - BR^{-1}B^\top P_N) \check{x}_i - BR^{-1}B^\top (\Pi_N - P_N) \mathbb{E}[\check{x}_i] + f - BR^{-1}B^\top \check{\phi}_N \right] dt + DdW_i,\\
            d\check{\phi}_N &= - \left[\left(A^\top - \Pi_N BR^{-1}B^\top \right) \check{\phi}_N + \Pi_N f - \left(I - \frac{\Gamma}{N}\right)^\top Q (\Gamma_1 \mathbb{E}[x_0] + \eta) \right] dt,\\
            x_0(0) &= \xi_0, \quad \check{x}_i(0) = \xi_i,\quad \check{\phi}_N(T) = 0, \quad i = 1,\cdots,N.
        \end{aligned}
        \right.
    \end{equation}
    Note that here $\check{x}^{(N)}(\cdot)$ means that all $x_i(\cdot), i = 1,\cdots,N$ take the optimal $\check{x}_i(\cdot)$, i.e., $\check{x}^{(N)}(\cdot) := \frac{1}{N} \sum_{i=1}^{N} \check{x}_i(\cdot)$.
    Denote $W^{(N)}(\cdot) := \frac{1}{N} \sum_{i=1}^{N} W_i(\cdot)$, and $\xi^{(N)} := \frac{1}{N} \sum_{i=1}^{N} \xi_i$.

    We present the subsequent outcome.

    \begin{mythm}\label{leader thm1}
        Under Assumptions (A1)-(A3), let the followers adopt the optimal strategy (\ref{optimal strategy of followers}). Then Problem {\bf (PG2)} admits an optimal control $\check{u}_0(\cdot) \in \mathscr{U}_{d,0}[0,T]$, if and only if the following two conditions hold:

        (i) The adapted solution $\left( \check{x}_0(\cdot), \check{x}^{(N)}(\cdot), \check{\phi}_N(\cdot), \check{y}_0(\cdot), \check{z}_0(\cdot), \check{z}(\cdot), \check{y}^{(N)}(\cdot), \check{z}_0^{(N)}(\cdot), \check{z}^{(N)}(\cdot), \check{\psi}_N(\cdot) \right)$ to the FBSDE
        \begin{equation}\label{adjoint FBSDE of leader}
            \left\{
            \begin{aligned}
                d\check{x}_0 = & \left[ A_0\check{x}_0 + B_0\check{u}_0 + f_0 \right]dt + D_0dW_0, \quad \check{x}_0(0) = \xi_0, \\
                d\check{x}^{(N)} = & \left[ (A - BR^{-1}B^\top P_N) \check{x}^{(N)} - BR^{-1}B^\top (\Pi_N - P_N) \mathbb{E}[\check{x}^{(N)}] + f - BR^{-1}B^\top \check{\phi}_N \right] dt \\
                &+ DdW^{(N)}, \quad \check{x}^{(N)}(0) = \xi^{(N)},\\
                d\check{\phi}_N = &\ -\left[ (A^\top - \Pi_NBR^{-1}B^\top)\check{\phi}_N + \Pi_Nf - \left(I - \frac{\Gamma}{N}\right)^\top Q (\Gamma_1 \mathbb{E}[\check{x}_0] + \eta) \right]dt, \\
                &\ \check{\phi}_N(T) = 0, \\
                d\check{y}_0 = &\ -\left[ A_0^\top \check{y}_0 + \Gamma_1^\top Q \left(I - \frac{\Gamma}{N}\right) \mathbb{E}[\check{\psi}_N] + Q_0(\check{x}_0 - \Gamma_0 \check{x}^{(N)} - \eta_0) \right]dt \\
                &\ + \check{z}_0 dW_0 + \check{z} dW^{(N)}, \quad \check{y}_0(T) = 0, \\
                d\check{y}^{(N)} = &\ -\big[ (A - BR^{-1}B^\top P_N)^\top \check{y}^{(N)} - (\Pi_N - P_N)BR^{-1}B^\top \mathbb{E}[\check{y}^{(N)}] \\
                &\ - \Gamma_0^\top Q_0(\check{x}_0 - \Gamma_0\check{x}^{(N)} - \eta_0) \big]dt + \check{z}_0^{(N)} dW_0 + \check{z}^{(N)} dW^{(N)}, \quad \check{y}^{(N)}(T) = 0, \\
                d\check{\psi}_N = &\ \left[ BR^{-1}B^\top \check{y}^{(N)} + (A - BR^{-1}B^\top \Pi_N)\check{\psi}_N \right]dt, \quad \check{\psi}_N(0) = 0,
            \end{aligned}
            \right.
        \end{equation}
        satisfies the following stationarity condition:
        \begin{equation}\label{stationarity condition of leader}
            B_0^\top \mathbb{E}_0[\check{y}_0] + R_0\check{u}_0 = 0.
        \end{equation}

        (ii) The following convexity condition holds:
        \begin{equation}\label{convexity condition of leader}
            \mathbb{E} \int_0^T \left[ ||\tilde{x}_0 - \Gamma_0\tilde{x}^{(N)}||_{Q_0}^2 + ||u_0||_{R_0}^2\right]dt \ge 0, \quad \forall u_0(\cdot) \in \mathscr{U}_{d,0}[0,T],
        \end{equation}
        where $\left(\tilde{x}_0(\cdot), \tilde{x}^{(N)}(\cdot), \tilde{\phi}_N(\cdot)\right)$ is the solution to the following FBSDE
        \begin{equation}\label{random FBSDE of leader}
            \left\{
            \begin{aligned}
                d\tilde{x}_0 = &\ \left[ A_0\tilde{x}_0 + B_0u_0 \right]dt, \quad \tilde{x}_0(0) = 0, \\
                d\tilde{x}^{(N)} = &\ \left[ (A - BR^{-1}B^\top P_N)\tilde{x}^{(N)} - BR^{-1}B^\top (\Pi_N - P_N) \mathbb{E}[\tilde{x}^{(N)}] - BR^{-1}B^\top \tilde{\phi}_N \right]dt,\\
                & \ \tilde{x}^{(N)}(0) = 0, \\
                d\tilde{\phi}_N = & -\left[(A^\top - \Pi_NBR^{-1}B^\top)\tilde{\phi}_N - \left(I - \frac{\Gamma}{N}\right)^\top Q \Gamma_1 \mathbb{E}[\tilde{x}_0] \right]dt, \quad \tilde{\phi}_N(T) = 0.
            \end{aligned}
            \right.
        \end{equation}
    \end{mythm}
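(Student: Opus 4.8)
The plan is to run the convex variational scheme already used in the proof of Theorem~\ref{followers thm1}, now for the leader's LQ problem \textbf{(PG2)} with the forward--backward dynamics \eqref{state of leader}. Since the leader's cost \eqref{cost of leader-} sees the state only through the triple $(x_0,\check x^{(N)},\check\phi_N)$, I would first reduce \eqref{state of leader} to the closed system for that triple (the first three lines of \eqref{adjoint FBSDE of leader}), noting that $\check\phi_N$ is coupled back to $x_0$ only through $\mathbb{E}[x_0]$, and that this reduced system is a well-posed linear FBSDE by Theorem 4.1 of \cite{Ma-Yong-99}, so that the backward adjoint equations introduced below also admit adapted solutions. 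Fix a candidate $\check u_0(\cdot)\in\mathscr{U}_{d,0}[0,T]$ together with an adapted solution $(\check x_0,\check x^{(N)},\check\phi_N,\check y_0,\check z_0,\check z,\check y^{(N)},\check z_0^{(N)},\check z^{(N)},\check\psi_N)$ of \eqref{adjoint FBSDE of leader}.

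For arbitrary $u_0(\cdot)\in\mathscr{U}_{d,0}[0,T]$ and $\varepsilon\in\mathbb{R}$ I would perturb $\check u_0$ to $\check u_0+\varepsilon u_0$, let $(x_0^\varepsilon,\check x^{(N),\varepsilon},\check\phi_N^\varepsilon)$ be the corresponding solution of the reduced \eqref{state of leader}, and set $(\tilde x_0,\tilde x^{(N)},\tilde\phi_N):=\varepsilon^{-1}(x_0^\varepsilon-\check x_0,\ \check x^{(N),\varepsilon}-\check x^{(N)},\ \check\phi_N^\varepsilon-\check\phi_N)$. By linearity this triple is independent of $\varepsilon$ and solves exactly \eqref{random FBSDE of leader}, the term $-(I-\frac{\Gamma}{N})^\top Q\Gamma_1\mathbb{E}[\tilde x_0]$ driving $\tilde\phi_N$ being precisely the linearization of the $\mathbb{E}[x_0]$-coupling. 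Expanding the quadratic functional then yields
\[
\check J_0^N(\check u_0+\varepsilon u_0)-\check J_0^N(\check u_0)=\frac{\varepsilon^2}{2}\,\mathbb{E}\!\int_0^T\!\Big[\|\tilde x_0-\Gamma_0\tilde x^{(N)}\|_{Q_0}^2+\|u_0\|_{R_0}^2\Big]dt+\varepsilon\,\mathcal{I},
\]
where $\mathcal{I}=\mathbb{E}\int_0^T\big[\big\langle Q_0(\check x_0-\Gamma_0\check x^{(N)}-\eta_0),\ \tilde x_0-\Gamma_0\tilde x^{(N)}\big\rangle+\langle R_0\check u_0,u_0\rangle\big]dt$.

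The heart of the argument is to show that the adjoint system in \eqref{adjoint FBSDE of leader} is exactly what converts $\mathcal{I}$ into $\mathbb{E}\int_0^T\langle B_0^\top\mathbb{E}_0[\check y_0]+R_0\check u_0,\,u_0\rangle dt$. For this I would apply It\^{o}'s formula to the three pairings $\langle\check y_0,\tilde x_0\rangle$, $\langle\check y^{(N)},\tilde x^{(N)}\rangle$ and $\langle\check\psi_N,\tilde\phi_N\rangle$, integrate over $[0,T]$, and take expectations; the boundary data $\check y_0(T)=\check y^{(N)}(T)=0$, $\tilde x_0(0)=\tilde x^{(N)}(0)=0$, $\check\psi_N(0)=0$, $\tilde\phi_N(T)=0$ annihilate all endpoint contributions and the martingale integrals vanish in expectation. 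The drift coefficients in \eqref{adjoint FBSDE of leader} are arranged to be the transposes of those in \eqref{random FBSDE of leader}, so the $\check\phi_N$--$\check x^{(N)}$ coupling (the $-BR^{-1}B^\top\tilde\phi_N$ term paired against $BR^{-1}B^\top\check y^{(N)}$), the $\mathbb{E}[\tilde x_0]$--$\mathbb{E}[\check\psi_N]$ coupling (the $-(I-\frac{\Gamma}{N})^\top Q\Gamma_1$ term against $\Gamma_1^\top Q(I-\frac{\Gamma}{N})$), and the two $\mathbb{E}[\tilde x^{(N)}]$ cross terms cancel pairwise; here one repeatedly uses $\mathbb{E}\langle a,\mathbb{E}[b]\rangle=\mathbb{E}\langle\mathbb{E}[a],b\rangle$ to move the (conditional and unconditional) expectation operators across the inner products. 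What remains is $\mathcal{I}=\mathbb{E}\int_0^T\langle B_0^\top\check y_0+R_0\check u_0,u_0\rangle dt$, and since $u_0(\cdot)$ is $\mathcal{F}_t^0$-adapted the tower property replaces $\check y_0$ by $\mathbb{E}_0[\check y_0]$.

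The conclusion then follows as in Theorem~\ref{followers thm1}: by the displayed decomposition, $\check u_0$ minimizes $\check J_0^N$ over $\mathscr{U}_{d,0}[0,T]$ if and only if $\mathcal{I}$ vanishes for every admissible $u_0$ --- equivalently, by the arbitrariness of $u_0$, the stationarity condition \eqref{stationarity condition of leader} holds --- and the quadratic coefficient is nonnegative, which is the convexity condition \eqref{convexity condition of leader}. For the ``only if'' direction, given an optimal $\check u_0$ one constructs the adjoint solution from the (linear) backward equations and lets $\varepsilon\to0$ to obtain $\mathcal{I}=0$, then takes $\varepsilon$ small to force nonnegativity of the quadratic term; for ``if'', the decomposition with $\varepsilon=1$ is itself the optimality inequality. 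I expect the main obstacle to be precisely this It\^{o}/duality bookkeeping: matching, sign by sign and transpose by transpose, every cross term between \eqref{adjoint FBSDE of leader} and \eqref{random FBSDE of leader}, including those buried inside expectations, which is where the exact form of the adjoint drifts is determined.
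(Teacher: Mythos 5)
Your proposal is correct and follows essentially the same route as the paper: linearize the perturbed reduced state system to obtain \eqref{random FBSDE of leader}, expand the quadratic cost into a second-order term plus $\varepsilon\mathcal{I}$, and apply It\^{o}'s formula to $\langle\check y_0,\tilde x_0\rangle+\langle\check y^{(N)},\tilde x^{(N)}\rangle+\langle\check\psi_N,\tilde\phi_N\rangle$ so that the adjoint drifts cancel the cross terms and identify $\mathcal{I}$ with $\mathbb{E}\int_0^T\langle B_0^\top\check y_0+R_0\check u_0,u_0\rangle dt$. Your explicit remarks on the tower-property step yielding $\mathbb{E}_0[\check y_0]$ and on the pairwise cancellation of the expectation-coupled cross terms are details the paper leaves implicit, but they do not change the argument.
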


    \begin{proof}
        For given $\xi_0$ and $\check{u}_0(\cdot) \in \mathscr{U}_{d,0}[0,T]$, let $\left(\check{x}_0(\cdot), \check{x}^{(N)}(\cdot), \check{\phi}_N(\cdot), \check{y}_0(\cdot), \check{z}_0(\cdot), \check{z}(\cdot), \check{y}^{(N)}(\cdot)\right.$,\\ $\left. \check{z}_0^{(N)}(\cdot), \check{z}^{(N)}(\cdot), \check{\psi}_N(\cdot)\right)$ be an adapted solution to FBSDE (\ref{adjoint FBSDE of leader}).
        For any $u_0(\cdot) \in \mathscr{U}_{d,0}[0,T]$ and $\varepsilon \in \mathbb{R}$, let $\left(x_0^{\varepsilon}(\cdot), x^{\varepsilon(N)}(\cdot), \phi_N^{\varepsilon}(\cdot)\right)$ be the solution to the following perturbed state equation of the leader:
        \begin{equation*}
            \left\{
            \begin{aligned}
                dx_0^{\varepsilon} = &\ \left[A_0x_0^{\varepsilon} + B_0(\check{u}_0 + \varepsilon u_0) + f_0\right]dt + D_0dW_0, \quad x_0^{\varepsilon}(0) = \xi_0, \\
                dx^{\varepsilon(N)} = &\ \left[(A - BR^{-1}B^\top P_N)x^{\varepsilon(N)} - BR^{-1}B^\top (\Pi_N - P_N) \mathbb{E}[x^{\varepsilon(N)}] + f - BR^{-1}B^\top \phi_N^{\varepsilon} \right]dt \\
                &\ + DdW^{(N)}, \quad x^{\varepsilon(N)}(0) = \xi^{(N)}, \\
                d\phi_N^{\varepsilon} = & -\left[(A^\top - \Pi_NBR^{-1}B^\top)\phi_N^{\varepsilon} + \Pi_N f - \left(I - \frac{\Gamma}{N}\right)^\top Q (\Gamma_1 \mathbb{E}[x_0^{\varepsilon}] + \eta)\right]dt, \quad \phi_N^{\varepsilon}(T) = 0. \\
            \end{aligned}
            \right.
        \end{equation*}
        Then, denoting by $\left(\tilde{x}_0(\cdot), \tilde{x}^{(N)}(\cdot), \tilde{\phi}_N(\cdot)\right)$ the solution to (\ref{random FBSDE of leader}), we have $x_0^{\varepsilon}(\cdot) = \check{x}_0(\cdot) + \varepsilon \tilde{x}_0(\cdot)$, $x^{\varepsilon(N)}(\cdot) = \check{x}^{(N)}(\cdot) + \varepsilon \tilde{x}^{(N)}(\cdot)$, $\phi_N^{\varepsilon}(\cdot) = \check{\phi}_N(\cdot) + \varepsilon \tilde{\phi}_N(\cdot)$, and
        \begin{equation*}
            \begin{aligned}
                &\check{J}^N_0(\check{u}_0(\cdot) + \varepsilon u_0(\cdot)) - \check{J}^N_0(\check{u}_0(\cdot))\\
                = &\ \frac{1}{2} \mathbb{E} \int_0^T \left[||x_0^{\varepsilon} - \Gamma_0\check{x}^{\varepsilon(N)} - \eta_0||^2_{Q_0} + ||\check{u}_0 + \varepsilon u_0||^2_{R_0}\right]dt \\
                &\ - \frac{1}{2} \mathbb{E} \int_0^T \left[||\check{x}_0 - \Gamma_0\check{x}^{(N)} - \eta_0||^2_{Q_0} + ||\check{u}_0||^2_{R_0}\right]dt \\
                = &\ \frac{1}{2} \mathbb{E} \int_0^T \left[||(\check{x}_0 + \varepsilon \tilde{x}_0) - \Gamma_0(\check{x}^{(N)} + \varepsilon \tilde{x}^{(N)}) - \eta_0||^2_{Q_0} + ||\check{u}_0 + \varepsilon u_0||^2_{R_0}\right]dt \\
                &\ - \frac{1}{2} \mathbb{E} \int_0^T \left[||\check{x}_0 - \Gamma_0\check{x}^{(N)} - \eta_0||^2_{Q_0} + ||\check{u}_0||^2_{R_0}\right]dt \\
                = &\ \frac{1}{2} \varepsilon^2 \mathbb{E} \int_0^T \left[ ||\tilde{x}_0 - \Gamma_0\tilde{x}^{(N)}||_{Q_0}^2 + ||u_0||_{R_0}^2\right]dt \\
                &\ + \varepsilon \mathbb{E} \int_0^T \left[ \left\langle Q_0(\check{x}_0 - \Gamma_0 \check{x}^{(N)} - \eta_0), \tilde{x}_0 - \Gamma_0\tilde{x}^{(N)} \right\rangle + \left\langle R_0\check{u}_0,u_0 \right\rangle \right]dt.
            \end{aligned}
        \end{equation*}
        On the other hand, applying It\^{o}'s formula to $\langle \check{y}_0(\cdot), \tilde{x}_0(\cdot) \rangle + \langle \check{y}^{(N)}(\cdot), \tilde{x}^{(N)}(\cdot) \rangle + \langle \check{\psi}_N(\cdot), \tilde{\phi}_N(\cdot) \rangle$, integrating from 0 to $T$ and taking expectation, we obtain
        \begin{equation*}
            \begin{aligned}
                0 = &\ \mathbb{E}[\langle \check{y}_0(T), \tilde{x}_0(T) \rangle - \langle \check{y}_0(0), \tilde{x}_0(0) \rangle + \langle \check{y}^{(N)}(T), \tilde{x}^{(N)}(T) \rangle
                 - \langle \check{y}^{(N)}(0), \tilde{x}^{(N)}(0) \rangle \\
                &\ + \langle \check{\psi}_N(T), \tilde{\phi}_N(T) \rangle] - \langle \check{\psi}_N(0), \tilde{\phi}_N(0) \rangle] \\
                = &\ \mathbb{E} \int_0^T \bigg[ \langle \check{y}_0, A_0\tilde{x}_0 + B_0u_0 \rangle - \left\langle A_0^\top \check{y}_0
                 + \Gamma_1^\top Q \left(I - \frac{\Gamma}{N}\right) \mathbb{E}[\check{\psi}_N] + Q_0(\check{x}_0 - \Gamma_0 \check{x}^{(N)} - \eta_0), \tilde{x}_0 \right\rangle \\
                &\ + \left\langle \check{y}^{(N)}, (A - BR^{-1}B^\top P_N)\tilde{x}^{(N)} - BR^{-1}B^\top (\Pi_N - P_N) \mathbb{E}[\tilde{x}^{(N)}] - BR^{-1}B^\top \tilde{\phi}_N \right\rangle \\
                &\ - \left\langle (A - BR^{-1}B^\top P_N)^\top \check{y}^{(N)} - (\Pi_N - P_N)BR^{-1}B^\top \mathbb{E}[\check{y}^{(N)}] \right.\\
                &\qquad \left.- \Gamma_0^\top Q_0(\check{x}_0 - \Gamma_0\check{x}^{(N)} - \eta_0), \tilde{x}^{(N)} \right\rangle
                 - \bigg\langle \check{\psi}_N, (A^\top - \Pi_NBR^{-1}B^\top)\tilde{\phi}_N \\
                &\qquad \left.- \left(I - \frac{\Gamma}{N}\right)^\top Q \Gamma_1 \mathbb{E}[\tilde{x}_0] \right\rangle
                 + \langle BR^{-1}B^\top \check{y}^{(N)} + (A - BR^{-1}B^\top \Pi_N)\check{\psi}_N, \tilde{\phi}_N \rangle \bigg]dt \\
                = &\ \mathbb{E} \int_0^T \left[ \left\langle B_0^\top\check{y}_0,u_0 \right\rangle - \left\langle Q_0(\check{x}_0 - \Gamma_0 \check{x}^{(N)} - \eta_0), \tilde{x}_0 - \Gamma_0\tilde{x}^{(N)} \right\rangle  \right]dt.
            \end{aligned}
        \end{equation*}
        Hence,
        $$\check{J}^N_0(\check{u}_0(\cdot)) \le \check{J}^N_0(\check{u}_0(\cdot) + \varepsilon u_0(\cdot))$$
        if and only if (\ref{stationarity condition of leader}) and (\ref{convexity condition of leader}) hold. The proof is complete.
    \end{proof}

    \begin{Remark}
        Assumption (A4) represents a special instance of the convexity condition articulated in equation (\ref{convexity condition of leader}).
    \end{Remark}

    Under the premise of (A4), we can determine the optimal control of the leader is
    \begin{equation}\label{open-loop optimal strategies of leader}
        \check{u}_0 = -R_0^{-1}B_0^\top \mathbb{E}_0[\check{y}_0].
    \end{equation}
    So the related Hamiltonian system can be represented by
    \begin{equation}\label{Hamiltonian system of leader}
        \left\{
        \begin{aligned}
            d\check{x}_0 = & \left[ A_0\check{x}_0 - B_0R_0^{-1}B_0^\top \mathbb{E}_0[\check{y}_0] + f_0 \right]dt + D_0dW_0, \quad \check{x}_0(0) = \xi_0, \\
            d\check{x}^{(N)} = & \left[ (A - BR^{-1}B^\top P_N) \check{x}^{(N)} - BR^{-1}B^\top (\Pi_N - P_N) \mathbb{E}[\check{x}^{(N)}] + f - BR^{-1}B^\top \check{\phi}_N \right] dt \\
            &+ DdW^{(N)}, \quad \check{x}^{(N)}(0) = \xi^{(N)},\\
            d\check{\phi}_N = &\ -\left[ (A^\top - \Pi_NBR^{-1}B^\top)\check{\phi}_N + \Pi_Nf - \left(I - \frac{\Gamma}{N}\right)^\top Q (\Gamma_1 \mathbb{E}[\check{x}_0] + \eta) \right]dt, \\
            &\ \check{\phi}_N(T) = 0, \\
            d\check{y}_0 = &\ -\left[ A_0^\top \check{y}_0 + \Gamma_1^\top Q \left(I - \frac{\Gamma}{N}\right) \mathbb{E}[\check{\psi}_N] + Q_0(\check{x}_0 - \Gamma_0 \check{x}^{(N)} - \eta_0) \right]dt \\
            &\ + \check{z}_0 dW_0 + \check{z} dW^{(N)}, \quad \check{y}_0(T) = 0, \\
            d\check{y}^{(N)} = &\ -\big[ (A - BR^{-1}B^\top P_N)^\top \check{y}^{(N)} - (\Pi_N - P_N)BR^{-1}B^\top \mathbb{E}[\check{y}^{(N)}] \\
            &\ - \Gamma_0^\top Q_0(\check{x}_0 - \Gamma_0\check{x}^{(N)} - \eta_0) \big]dt + \check{z}_0^{(N)} dW_0 + \check{z}^{(N)} dW^{(N)}, \quad \check{y}^{(N)}(T) = 0, \\
            d\check{\psi}_N = &\ \left[ BR^{-1}B^\top \check{y}^{(N)} + (A - BR^{-1}B^\top \Pi_N)\check{\psi}_N \right]dt, \quad \check{\psi}_N(0) = 0.
        \end{aligned}
        \right.
    \end{equation}

    Owing to the presence of the term $\mathbb{E}_0[\check{y}_0]$ in equation (\ref{open-loop optimal strategies of leader}), it is necessary to derive the equation of $\mathbb{E}_0[\check{y}_0]$.
    Prior to this derivation, we will first present the following material.

    By taking conditional expectations $\mathbb{E}_0[\cdot]$, we have
    \begin{equation}\label{conditional expectation for F0}
        \mathbb{E}_0[\check{x}_0] = \check{x}_0, \quad \mathbb{E}_0[\check{x}^{(N)}] = \mathbb{E}[\check{x}^{(N)}], \quad \mathbb{E}_0[\check{\phi}_N] = \check{\phi}_N.
    \end{equation}

    Setting conditional expectations $\mathbb{E}_0[\cdot]$ for (\ref{Hamiltonian system of leader}), we have
    \begin{equation}\label{conditional Hamiltonian system of leader}
        \left\{
        \begin{aligned}
            d\check{x}_0 = & \left[ A_0\check{x}_0 - B_0R_0^{-1}B_0^\top \mathbb{E}_0[\check{y}_0] + f_0 \right]dt + D_0dW_0, \quad \check{x}_0(0) = \xi_0, \\
            d\mathbb{E}[\check{x}^{(N)}] = & \left[ (A - BR^{-1}B^\top \Pi_N) \mathbb{E}[\check{x}^{(N)}] + f - BR^{-1}B^\top \check{\phi}_N \right] dt, \quad \mathbb{E}_0[\check{x}^{(N)}(0)] = \bar{\xi},\\
            d\check{\phi}_N = &\ -\left[ (A^\top - \Pi_NBR^{-1}B^\top)\check{\phi}_N + \Pi_Nf - \left(I - \frac{\Gamma}{N}\right)^\top Q (\Gamma_1 \mathbb{E}[\check{x}_0] + \eta) \right]dt, \\
            &\ \check{\phi}_N(T) = 0, \\
            d\mathbb{E}_0[\check{y}_0] = &\ -\left[ A_0^\top \mathbb{E}_0[\check{y}_0] + \Gamma_1^\top Q \left(I - \frac{\Gamma}{N}\right) \mathbb{E}[\check{\psi}_N] + Q_0(\check{x}_0 - \Gamma_0 \mathbb{E}[\check{x}^{(N)}] - \eta_0) \right]dt \\
            &\ + \mathbb{E}_0[\check{z}_0] dW_0, \quad \mathbb{E}_0[\check{y}_0(T)] = 0, \\
            d\mathbb{E}_0[\check{y}^{(N)}] = &\ -\big[ (A - BR^{-1}B^\top P_N)^\top \mathbb{E}_0[\check{y}^{(N)}] - (\Pi_N - P_N)BR^{-1}B^\top \mathbb{E}[\check{y}^{(N)}] \\
            &\ - \Gamma_0^\top Q_0(\check{x}_0 - \Gamma_0\mathbb{E}[\check{x}^{(N)}] - \eta_0) \big]dt + \mathbb{E}_0[\check{z}_0^{(N)}] dW_0, \quad \check{y}^{(N)}(T) = 0, \\
            d\mathbb{E}_0[\check{\psi}_N] = &\ \left[ BR^{-1}B^\top \mathbb{E}_0[\check{y}^{(N)}] + (A - BR^{-1}B^\top \Pi_N)\mathbb{E}_0[\check{\psi}_N] \right]dt, \quad \mathbb{E}_0[\check{\psi}_N(0)] = 0,
        \end{aligned}
        \right.
    \end{equation}

    Denote
    \begin{equation*}
        \begin{aligned}
            \check{X} :=&
            \begin{bmatrix}
                \check{x}_0 \\ \mathbb{E}[\check{x}^{(N)}] \\ \mathbb{E}_0[\check{\psi}_N]
            \end{bmatrix}, \quad
            \check{Y} :=
            \begin{bmatrix}
                \mathbb{E}_0[\check{y}_0] \\ \mathbb{E}_0[\check{y}^{(N)}] \\ \check{\phi}_N
            \end{bmatrix}, \quad
            \check{Z} :=
            \begin{bmatrix}
                \mathbb{E}_0[\check{z}_0] \\ \mathbb{E}_0[\check{z}_0^{(N)}] \\ 0
            \end{bmatrix},\\
            \check{\mathcal{D}} :=&
            \begin{bmatrix}
                D_0 \\ 0 \\ 0
            \end{bmatrix},\quad
            \check{\mathfrak{f}} :=
            \begin{bmatrix}
                f_0 \\ f \\ 0
            \end{bmatrix},\quad
            \check{\mathfrak{f}}_1 :=
            \begin{bmatrix}
                Q_0 \eta_0 \\ - \Gamma_0^\top Q_0 \eta_0 \\ (I - \frac{\Gamma}{N})^\top Q \eta - \Pi_N f
            \end{bmatrix},\\
            \check{\mathcal{A}} :=&
            \begin{bmatrix}
                A_0 & 0 & 0 \\
                0 & A - BR^{-1}B^\top \Pi_N & 0 \\
                0 & 0 & A - BR^{-1}B^\top \Pi_N \\
            \end{bmatrix},\\
            \check{\mathcal{B}} :=&
            \begin{bmatrix}
                - B_0R_0^{-1}B_0^\top & 0 & 0 \\
                0 & 0 & -BR^{-1}B^\top \\
                0 & -BR^{-1}B^\top & 0 \\
            \end{bmatrix},\\
            \check{\mathcal{A}}_1 :=&
            \begin{bmatrix}
                - Q_0 & Q_0 & 0 \\
                \Gamma_0^\top Q_0 & -\Gamma_0^\top Q_0 & 0 \\
                0 & 0 & 0 \\
            \end{bmatrix},\quad
            \check{\mathcal{B}}_2 :=
            \begin{bmatrix}
                0 & 0 & 0 \\
                0 & (\Pi_N - P_N)BR^{-1}B^\top & 0 \\
                0 & 0 & 0 \\
            \end{bmatrix},\\
            \check{\mathcal{B}}_1 :=&
            \begin{bmatrix}
                - A_0^\top & 0 & 0 \\
                0 & - A^\top + P_NBR^{-1}B^\top & 0 \\
                0 & 0 & - A + \Pi_NBR^{-1}B^\top \\
            \end{bmatrix},\\
            \check{\mathcal{A}}_2 :=&
            \begin{bmatrix}
                0 & 0 & -\Gamma_1^\top Q \left( I - \frac{\Gamma}{N} \right) \\
                0 & 0 & 0 \\
                \left( I - \frac{\Gamma}{N} \right)^\top Q \Gamma_1 & 0 & 0 \\
            \end{bmatrix}.
        \end{aligned}
    \end{equation*}
    With the above notions, we can rewrite (\ref{conditional Hamiltonian system of leader}) as
    \begin{equation}\label{extended dimensional Hamiltonian system of leader}
        \left\{
        \begin{aligned}
            d\check{X} &= \big[ \check{\mathcal{A}} \check{X} + \check{\mathcal{B}} \check{Y} + \check{\mathfrak{f}} \big] dt + \check{\mathcal{D}} dW_0,\quad \check{X}(0) = [\xi_0^\top, \bar{\xi}^\top, 0]^\top, \\
            d\check{Y} &= \big[ \check{\mathcal{A}}_1 \check{X} + \check{\mathcal{B}}_1 \check{Y} + \check{\mathcal{A}}_2 \mathbb{E}[\check{X}] + \check{\mathcal{B}}_2 \mathbb{E}[\check{Y}] + \check{\mathfrak{f}}_1 \big] dt + \check{Z} dW_0,\quad \check{Y}(T) = 0.
        \end{aligned}
        \right.
    \end{equation}

    Suppose $(\check{X}(\cdot), \check{Y}(\cdot), \check{Z}(\cdot))$ is an adapted solution to (\ref{extended dimensional Hamiltonian system of leader}). We assume that $\check{X}(\cdot)$ and $\check{Y}(\cdot)$ are related by the following affine transformation
    \begin{equation}
        \check{Y}(\cdot) = \check{\mathcal{P}}(\cdot) \check{X}(\cdot) + \check{\mathcal{K}}(\cdot) \mathbb{E}[\check{X}(\cdot)] + \check{\mathcal{V}}(\cdot),
    \end{equation}
    where $\check{\mathcal{P}}(\cdot)$, $\check{\mathcal{K}}(\cdot)$ and $\check{\mathcal{V}}(\cdot)$ are both differentiable functions, with $\check{\mathcal{P}}(T) = 0$, $\check{\mathcal{K}}(T) = 0$ and $\check{\mathcal{V}}(T) = 0$.
    Subsequently, employing It\^{o}'s formula, we arrive at the following
    \begin{equation}\label{Ito formula of extended dimensional Hamiltonian system of leader}
        \begin{aligned}
            d\check{Y} = &\big[ \dot{\check{\mathcal{P}}} \check{X} + \dot{\check{\mathcal{K}}} \mathbb{E}[\check{X}] + \dot{\check{\mathcal{V}}} \big] dt + \check{\mathcal{P}} \big[ \check{\mathcal{A}} \check{X} + \check{\mathcal{B}} (\check{\mathcal{P}} \check{X} + \check{\mathcal{K}} \mathbb{E}[\check{X}] + \check{\mathcal{V}}) + \check{\mathfrak{f}} \big] dt + \check{\mathcal{P}} \check{\mathcal{D}}dW_0 \\
            &+ \check{\mathcal{K}} \big[ \check{\mathcal{A}} \mathbb{E}[\check{X}] + \check{\mathcal{B}} ((\check{\mathcal{P}} + \check{\mathcal{K}}) \mathbb{E}[\check{X}] + \check{\mathcal{V}}) + \check{\mathfrak{f}} \big] dt.
        \end{aligned}
    \end{equation}
    Now, comparing (\ref{Ito formula of extended dimensional Hamiltonian system of leader}) with the backward equation in (\ref{extended dimensional Hamiltonian system of leader}), it follows that
    \begin{equation*}
        \check{Z} = \check{\mathcal{P}} \check{\mathcal{D}},
    \end{equation*}
    \begin{equation}\label{Riccati-1 of leader}
        \dot{\check{\mathcal{P}}} + \check{\mathcal{P}} \check{\mathcal{A}} + \check{\mathcal{P}} \check{\mathcal{B}} \check{\mathcal{P}} - \check{\mathcal{A}}_1 - \check{\mathcal{B}}_1 \check{\mathcal{P}} = 0, \quad \check{\mathcal{P}}(T) = 0,
    \end{equation}
    \begin{equation}\label{Riccati-2 of leader}
        \dot{\check{\mathcal{K}}} + \check{\mathcal{P}}\check{\mathcal{B}}\check{\mathcal{K}} + \check{\mathcal{K}}\check{\mathcal{A}} + \check{\mathcal{K}}\check{\mathcal{B}}(\check{\mathcal{P}}+\check{\mathcal{K}}) - \check{\mathcal{B}}_1 \check{\mathcal{K}} + \check{\mathcal{A}}_1 + \check{\mathcal{B}}_2 (\check{\mathcal{P}}+\check{\mathcal{K}}) = 0, \quad \check{\mathcal{K}}(T) = 0,
    \end{equation}
    \begin{equation}\label{BODE-1 of leader}
        \dot{\check{\mathcal{V}}} + (\check{\mathcal{P}}+\check{\mathcal{K}})\check{\mathcal{B}}(\check{\mathcal{V}}+\check{\mathfrak{f}}) - \check{\mathcal{B}}_1 \check{\mathcal{V}} + \check{\mathcal{B}}_2 \check{\mathcal{V}} + \check{\mathfrak{f}}_1 = 0, \quad \check{\mathcal{V}}(T) = 0.
    \end{equation}

    Let $\check{\mathcal{M}}(\cdot) := \check{\mathcal{P}}(\cdot) + \check{\mathcal{K}}(\cdot)$, then $\check{\mathcal{M}}(\cdot)$ satisfies
    \begin{equation}\label{Riccati-1+2 of leader}
        \dot{\check{\mathcal{M}}} + \check{\mathcal{M}}\check{\mathcal{A}} - \check{\mathcal{B}}_1 \check{\mathcal{M}} + \check{\mathcal{M}}\check{\mathcal{B}}\check{\mathcal{M}} + \check{\mathcal{B}}_2 \check{\mathcal{M}} + \check{\mathcal{A}}_2 -\check{\mathcal{A}}_1 = 0, \quad \check{\mathcal{M}}(T) = 0,
    \end{equation}
    thus (\ref{BODE-1 of leader}) can be rewritten as
    \begin{equation}\label{BODE-2 of leader}
        \dot{\check{\mathcal{V}}} + \check{\mathcal{M}}\check{\mathcal{B}}(\check{\mathcal{V}}+\check{\mathfrak{f}}) - \check{\mathcal{B}}_1 \check{\mathcal{V}} + \check{\mathcal{B}}_2 \check{\mathcal{V}} + \check{\mathfrak{f}}_1 = 0, \quad \check{\mathcal{V}}(T) = 0.
    \end{equation}

    By Theorem 4.1 of \cite{Ma-Yong-99} again, if (\ref{Riccati-1 of leader}) and (\ref{Riccati-2 of leader}) admit a solution $\check{\mathcal{P}}(\cdot)$, $\check{\mathcal{M}}(\cdot)$, respectively, then FBSDE (\ref{extended dimensional Hamiltonian system of leader}) admits a unique adapted solution $(\check{X}(\cdot), \check{Y}(\cdot), \check{Z}(\cdot))$.

    \begin{Remark}
        Note that the Riccati equations (\ref{Riccati-1 of leader}) and (\ref{Riccati-1+2 of leader}) are nonsymmetric, hence in general, they may not admit a solution.
        We give an existing conclusion for the existence of a unique solution to (\ref{Riccati-1 of leader}) and (\ref{Riccati-1+2 of leader}) (\cite{Ma-Yong-99}).
        If det$\left\{ [0,I] e^{\mathbb{A}t}
        \begin{bmatrix}
            0 \\ I
        \end{bmatrix} \right\} > 0$ with $\mathbb{A} := \begin{bmatrix}
            \check{\mathcal{A}} & \check{\mathcal{B}} \\
            \check{\mathcal{A}}_1 & \check{\mathcal{B}}_2 \\
        \end{bmatrix}$, then (\ref{Riccati-1 of leader}) admits a unique solution
        $$\check{\mathcal{P}} = - \left[ [0,I] e^{\mathbb{A}(T-t)}
        \begin{bmatrix}
            0 \\ I
        \end{bmatrix}
        \right]^{-1} [0,I] e^{\mathbb{A}(T-t)}
        \begin{bmatrix}
            I \\ 0
        \end{bmatrix}.$$
        (\ref{Riccati-1+2 of leader}) admit a solution $\check{\mathcal{M}}(\cdot)$, which is also the same as before.
    \end{Remark}

    We summarize the above analysis in the following theorem.

    \begin{mythm}\label{leader thm2}
        Under Assumptions (A1)-(A4), let the followers adopt the optimal strategy (\ref{optimal strategy of followers}), if (\ref{Riccati-1 of leader}) and (\ref{Riccati-2 of leader}) admit a solution $\check{\mathcal{P}}(\cdot)$, $\check{\mathcal{M}}(\cdot)$, respectively, then Problem {\bf (PG2)} admits a unique solution
        \begin{equation}\label{optimal strategy of leader}
            \check{u}_0 = -R_0^{-1} B_0^\top e_1 \left( \check{\mathcal{P}} \check{X} + \check{\mathcal{K}} \mathbb{E}[\check{X}] + \check{\mathcal{V}} \right),
        \end{equation}
        where $e_1 = [I,0,0]$,
        $$d\check{X} = \left[ (\check{\mathcal{A}} + \check{\mathcal{B}} \check{\mathcal{P}}) \check{X} + \check{\mathcal{B}} \check{\mathcal{K}} \mathbb{E}[\check{X}] + \check{\mathcal{B}} \check{\mathcal{V}} + \check{\mathfrak{f}} \right] dt + \check{\mathcal{D}} dW_0,\quad \check{X}(0) = [\xi_0^\top, \bar{\xi}^\top, 0]^\top, $$
        $$d\mathbb{E}[\check{X}] = \left[(\check{\mathcal{A}} + \check{\mathcal{B}} \check{\mathcal{M}}) \mathbb{E}[\check{X}] + \check{\mathcal{B}} \check{\mathcal{V}} + \check{\mathfrak{f}} \right] dt, \quad \mathbb{E}[\check{X}(0)] = [\bar{\xi}_0^\top, \bar{\xi}^\top, 0].$$
    \end{mythm}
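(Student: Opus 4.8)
The plan is to deduce Theorem \ref{leader thm2} from the variational characterization of Theorem \ref{leader thm1} together with the de-aggregation and dimension-expansion construction carried out above. Since Assumption (A4) forces the convexity condition (\ref{convexity condition of leader}) to hold for every $u_0(\cdot)\in\mathscr{U}_{d,0}[0,T]$ (indeed $R_0>0$ makes $\check{J}^N_0$ strictly convex in $u_0$), Theorem \ref{leader thm1} reduces Problem {\bf (PG2)} to exhibiting an adapted solution of the FBSDE (\ref{adjoint FBSDE of leader}) satisfying the stationarity condition (\ref{stationarity condition of leader}); strict convexity will then yield that the resulting control is the \emph{unique} minimizer.

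First I would insert the stationarity relation in the form $\check{u}_0=-R_0^{-1}B_0^\top\mathbb{E}_0[\check{y}_0]$, i.e.\ (\ref{open-loop optimal strategies of leader}), into (\ref{adjoint FBSDE of leader}) to obtain the Hamiltonian system (\ref{Hamiltonian system of leader}). Applying the conditional expectation $\mathbb{E}_0[\cdot]=\mathbb{E}[\cdot\,|\,\mathcal{F}^0_t]$ throughout, and using that $\check{x}^{(N)}$ and $\check{\phi}_N$ depend only on $\{\xi_i,W_i:1\le i\le N\}$ and deterministic data, hence are independent of $\mathcal{F}^0_t$, so that $\mathbb{E}_0[\check{x}^{(N)}]=\mathbb{E}[\check{x}^{(N)}]$ and $\mathbb{E}_0[\check{\phi}_N]=\check{\phi}_N$ (the relations (\ref{conditional expectation for F0})), together with the martingale fact (the analogue, for $W_0$ and $W^{(N)}$, of (\ref{qij conditional expectation for Fi})) that the stochastic integrals against $W^{(N)}$ drop out under $\mathbb{E}_0[\cdot]$ while those against $W_0$ pass through, produces the closed system (\ref{conditional Hamiltonian system of leader}). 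Stacking the forward unknowns into $\check{X}=[\check{x}_0^\top,\mathbb{E}[\check{x}^{(N)}]^\top,\mathbb{E}_0[\check{\psi}_N]^\top]^\top$ and the backward ones into $\check{Y}=[\mathbb{E}_0[\check{y}_0]^\top,\mathbb{E}_0[\check{y}^{(N)}]^\top,\check{\phi}_N^\top]^\top$, and reading off the matrices $\check{\mathcal{A}},\check{\mathcal{B}},\check{\mathcal{A}}_1,\check{\mathcal{A}}_2,\check{\mathcal{B}}_1,\check{\mathcal{B}}_2$ defined above, recasts (\ref{conditional Hamiltonian system of leader}) as the mean-field-type FBSDE (\ref{extended dimensional Hamiltonian system of leader}).

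Next I would impose the affine ansatz $\check{Y}=\check{\mathcal{P}}\check{X}+\check{\mathcal{K}}\mathbb{E}[\check{X}]+\check{\mathcal{V}}$ with $\check{\mathcal{P}}(T)=\check{\mathcal{K}}(T)=\check{\mathcal{V}}(T)=0$, differentiate via It\^o's formula as in (\ref{Ito formula of extended dimensional Hamiltonian system of leader}) (taking expectations to handle the $\mathbb{E}[\check{X}]$ term), and match the coefficients of $\check{X}$, $\mathbb{E}[\check{X}]$ and the inhomogeneous part against the backward equation of (\ref{extended dimensional Hamiltonian system of leader}). This gives $\check{Z}=\check{\mathcal{P}}\check{\mathcal{D}}$, the Riccati equations (\ref{Riccati-1 of leader})–(\ref{Riccati-2 of leader}) (equivalently (\ref{Riccati-1 of leader}) and (\ref{Riccati-1+2 of leader}) for $\check{\mathcal{M}}=\check{\mathcal{P}}+\check{\mathcal{K}}$), and the linear terminal-value ODE (\ref{BODE-2 of leader}) for $\check{\mathcal{V}}$. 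Under the standing hypothesis that the relevant Riccati equations are solvable on $[0,T]$, $\check{\mathcal{P}}$, $\check{\mathcal{M}}$ and hence $\check{\mathcal{K}}=\check{\mathcal{M}}-\check{\mathcal{P}}$ are continuous (hence bounded), (\ref{BODE-2 of leader}) has a unique bounded solution $\check{\mathcal{V}}$, and Theorem 4.1 of \cite{Ma-Yong-99} then supplies a unique adapted solution $(\check{X},\check{Y},\check{Z})$ of (\ref{extended dimensional Hamiltonian system of leader}); unwinding the stacking reconstructs the adapted solution of (\ref{adjoint FBSDE of leader}).

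Finally, since $\mathbb{E}_0[\check{y}_0]=e_1\check{Y}=e_1\big(\check{\mathcal{P}}\check{X}+\check{\mathcal{K}}\mathbb{E}[\check{X}]+\check{\mathcal{V}}\big)$ with $e_1=[I,0,0]$, the stationarity condition (\ref{stationarity condition of leader}) yields precisely (\ref{optimal strategy of leader}); substituting $\check{Y}$ into the forward equation of (\ref{extended dimensional Hamiltonian system of leader}) gives the stated closed-loop dynamics for $\check{X}$, and taking expectations gives the equation for $\mathbb{E}[\check{X}]$. Admissibility $\check{u}_0\in\mathscr{U}_{d,0}[0,T]$ is clear, since $\check{u}_0$ is a bounded-coefficient affine functional of the $\mathcal{F}^0_t$-adapted, square-integrable process $\check{X}$; optimality and uniqueness then follow from Theorem \ref{leader thm1} and the strict convexity coming from $R_0>0$. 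I expect the main work to be bookkeeping rather than analysis: verifying that the system (\ref{conditional Hamiltonian system of leader}) is genuinely \emph{closed} under $\mathbb{E}_0[\cdot]$ — that nothing outside the triples $(\check{x}_0,\mathbb{E}[\check{x}^{(N)}],\mathbb{E}_0[\check{\psi}_N])$ and $(\mathbb{E}_0[\check{y}_0],\mathbb{E}_0[\check{y}^{(N)}],\check{\phi}_N)$ survives — and that the block matrices are assembled so that the coefficient-matching reproduces (\ref{conditional Hamiltonian system of leader}) term by term; the genuinely hard point, solvability of the nonsymmetric Riccati equations (\ref{Riccati-1 of leader}) and (\ref{Riccati-1+2 of leader}), has deliberately been placed in the hypotheses.
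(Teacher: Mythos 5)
Your proposal is correct and follows essentially the same route as the paper: the paper presents Theorem \ref{leader thm2} as a summary of the preceding analysis, namely invoking Theorem \ref{leader thm1} with (A4) supplying convexity, substituting the stationarity condition to obtain the Hamiltonian system (\ref{Hamiltonian system of leader}), taking $\mathbb{E}_0[\cdot]$ to close the system as (\ref{conditional Hamiltonian system of leader}), stacking into the extended FBSDE (\ref{extended dimensional Hamiltonian system of leader}), decoupling via the affine ansatz and coefficient matching to get (\ref{Riccati-1 of leader})--(\ref{BODE-2 of leader}), and appealing to Theorem 4.1 of \cite{Ma-Yong-99} for the unique adapted solution. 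Your added remarks on why the conditional expectations close the system and on uniqueness via strict convexity from $R_0>0$ are consistent with, and slightly more explicit than, the paper's exposition.
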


    Combining Theorem \ref{followers thm2} and Theorem \ref{leader thm2}, strategy $(\check{u}_0,\check{u}_1,\cdots,\check{u}_N)$ that satisfies equations (\ref{optimal strategy of followers}) and (\ref{optimal strategy of leader}) constitutes a decentralized Stackelberg-Nash equilibrium.

    \section{LQ Stackelberg MF teams}

    \subsection{MF teams for the $N$ followers}

    Consistent with Subsection 3.1, we first address the multiagent team optimal problem for $N$ followers.

    \noindent {\bf (PS1)}: Minimize social cost $J^N_{soc}$ over $u_i(\cdot) \in \mathscr{U}_{d,i}[0,T],i = 1,\cdots,N$.

    For convenience, we define the following notation:
    $$Q_{\Gamma} := \Gamma^\top Q + Q \Gamma - \Gamma^\top Q \Gamma, \quad Q_{\Gamma_1} := Q \Gamma_1 - \Gamma^\top Q \Gamma_1, \quad Q_{\eta} := Q \eta - \Gamma^\top Q \eta.$$

    We arrive at the subsequent finding.

    \begin{mythm}\label{team followers thm1}
        Under Assumption (A1)-(A2), let $u_0(\cdot) \in \mathscr{U}_{d,0}[0,T]$ be given, for the initial value $\xi_i, i = 1,\cdots,N$, {\bf (PS1)} admits an optimal control $\hat{u}_i(\cdot) \in \mathscr{U}_{d,i}[0,T], i = 1,\cdots,N$, if and only if the following two conditions hold:

        (i) The adapted solution $\left( \hat{x}_i(\cdot), \hat{p}_i(\cdot), \hat{q}^j_i(\cdot), i = 1,\cdots,N, j = 0,1,\cdots,N \right)$ to the FBSDE
        \begin{equation}\label{adjoint FBSDE of team followers}
            \left\{
            \begin{aligned}
                d\hat{x}_i &= \big[A\hat{x}_i + B\hat{u}_i + f\big]dt + DdW_i,\\
                d\hat{p}_i &= -\left[A^\top \hat{p}_i + Q\hat{x}_i - Q_{\Gamma} \hat{x}^{(N)} - Q_{\Gamma_1} x_0 - Q_{\eta} \right]dt + \sum_{j=0}^{N} \hat{q}_i^j dW_j,\quad t\in[0,T],\\
                \hat{x}_i(0) &= \xi_i ,\quad \hat{p}_i(T) = 0 , \quad i = 1,\cdots,N,
            \end{aligned}
            \right.
        \end{equation}
        satisfies the following stationarity condition:
        \begin{equation}\label{stationarity condition of team followers}
            B^\top \mathbb{E}_i[\hat{p}_i] + R\hat{u}_i = 0, \quad i = 1,\cdots,N.
        \end{equation}

        (ii) For $i = 1,\cdots,N$, the following convexity condition holds:
        \begin{equation}\label{convexity condition of team followers}
            \sum_{i=1}^{N} \mathbb{E} \int_0^T \left[ ||\tilde{x}_i - \Gamma \tilde{x}^{(N)}||^2_Q + ||u_i||^2_R \right]dt \ge 0, \quad \forall u_i(\cdot) \in \mathscr{U}_{d,i}[0,T],\ i = 1,\cdots,N,
        \end{equation}
        where $\tilde{x}_i(\cdot)$ is the solution to the following RDE:
        \begin{equation}\label{random differential equation of team followers}
            \left\{
            \begin{aligned}
                &d\tilde{x}_i = [A\tilde{x}_i + Bu_i]dt,\\
                &\tilde{x}_i(0) = 0,
            \end{aligned}
            \right.
        \end{equation}
        with $\tilde{x}^{(N)} := \frac{1}{N} \sum_{i=1}^{N} \tilde{x}_i.$
    \end{mythm}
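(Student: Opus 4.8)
The plan is to mirror the variational argument used for Theorem \ref{followers thm1}, but now perturbing the controls of \emph{all} $N$ followers simultaneously, since in the team problem a single follower's control enters every cost functional $J^N_j$ through the shared MF term $x^{(N)}$. Fix $u_0(\cdot)\in\mathscr{U}_{d,0}[0,T]$, so $x_0(\cdot)$ is fixed. Given a candidate $(\hat u_i(\cdot))_{i=1}^N$ together with an adapted solution $(\hat x_i,\hat p_i,\hat q_i^j)$ of (\ref{adjoint FBSDE of team followers}), take arbitrary $u_i(\cdot)\in\mathscr{U}_{d,i}[0,T]$ and $\varepsilon\in\mathbb{R}$, and let $x_i^\varepsilon$ solve the $i$th state equation with control $\hat u_i+\varepsilon u_i$. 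Then $\tilde x_i:=(x_i^\varepsilon-\hat x_i)/\varepsilon$ is independent of $\varepsilon$, solves the RDE (\ref{random differential equation of team followers}), and $x^{\varepsilon(N)}=\hat x^{(N)}+\varepsilon\tilde x^{(N)}$.

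First I would expand the social cost increment. Using $x_i^\varepsilon-\Gamma x^{\varepsilon(N)}-\Gamma_1 x_0-\eta=(\hat x_i-\Gamma\hat x^{(N)}-\Gamma_1 x_0-\eta)+\varepsilon(\tilde x_i-\Gamma\tilde x^{(N)})$ and expanding each square, $J^N_{soc}(\hat u+\varepsilon u;u_0)-J^N_{soc}(\hat u;u_0)$ equals the exact quadratic
\begin{equation*}
\tfrac{1}{2N}\varepsilon^2\sum_{i=1}^N\mathbb{E}\int_0^T\big[\|\tilde x_i-\Gamma\tilde x^{(N)}\|_Q^2+\|u_i\|_R^2\big]dt+\tfrac{\varepsilon}{N}\sum_{i=1}^N\mathbb{E}\int_0^T\big[\langle Q(\hat x_i-\Gamma\hat x^{(N)}-\Gamma_1 x_0-\eta),\tilde x_i-\Gamma\tilde x^{(N)}\rangle+\langle R\hat u_i,u_i\rangle\big]dt .
\end{equation*}
Next I would rewrite the first-order coefficient via the costates. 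Applying It\^o's formula to $\sum_{i=1}^N\langle\hat p_i,\tilde x_i\rangle$ on $[0,T]$, using $\hat p_i(T)=0$, $\tilde x_i(0)=0$ and the dynamics of (\ref{adjoint FBSDE of team followers}) and (\ref{random differential equation of team followers}), the martingale parts vanish in expectation and one obtains
\begin{equation*}
\sum_{i=1}^N\mathbb{E}\int_0^T\langle B^\top\hat p_i,u_i\rangle dt=\sum_{i=1}^N\mathbb{E}\int_0^T\big\langle Q\hat x_i-Q_\Gamma\hat x^{(N)}-Q_{\Gamma_1}x_0-Q_\eta,\ \tilde x_i\big\rangle dt .
\end{equation*}

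The step I expect to be the main obstacle is the identification of this right-hand side with the first-order coefficient in the expansion. One must expand $\sum_i\langle Q(\hat x_i-\Gamma\hat x^{(N)}-\Gamma_1 x_0-\eta),\tilde x_i-\Gamma\tilde x^{(N)}\rangle$, move the $\Gamma\tilde x^{(N)}$ part using $\sum_i\tilde x_i=N\tilde x^{(N)}$ together with $\sum_i(\hat x_i-\Gamma\hat x^{(N)}-\Gamma_1 x_0-\eta)=N[(I-\Gamma)\hat x^{(N)}-\Gamma_1 x_0-\eta]$, and verify that the coefficient of $\hat x^{(N)}$ collapses precisely to the symmetric matrix $Q_\Gamma=\Gamma^\top Q+Q\Gamma-\Gamma^\top Q\Gamma$, and similarly for $Q_{\Gamma_1}$ and $Q_\eta$. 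This bookkeeping is absent from the Nash case of Theorem \ref{followers thm1}, where only the $i$th functional reacts to the $i$th perturbation; here it is exactly what forces the drift of the backward equation in (\ref{adjoint FBSDE of team followers}). Combining the two displays, the first-order coefficient equals $\tfrac{1}{N}\sum_i\mathbb{E}\int_0^T\langle B^\top\hat p_i+R\hat u_i,u_i\rangle dt$, and since $u_i(\cdot)$ and $\hat u_i(\cdot)$ are $\mathcal{F}^i_t$-adapted, the tower property turns $\hat p_i$ into $\mathbb{E}_i[\hat p_i]$ in the cross term.

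Finally I would read off the equivalence. Write $J^N_{soc}(\hat u+\varepsilon u;u_0)-J^N_{soc}(\hat u;u_0)=a(u)\varepsilon^2+b(u)\varepsilon$ with $b(u)=\tfrac{1}{N}\sum_i\mathbb{E}\int_0^T\langle B^\top\mathbb{E}_i[\hat p_i]+R\hat u_i,u_i\rangle dt$ and $a(u)=\tfrac{1}{2N}\sum_i\mathbb{E}\int_0^T[\|\tilde x_i-\Gamma\tilde x^{(N)}\|_Q^2+\|u_i\|_R^2]dt$. This quantity is $\ge 0$ for every $\varepsilon\in\mathbb{R}$ and every admissible $u$ if and only if $b(u)=0$ for all $u$ and $a(u)\ge 0$ for all $u$; taking $u$ supported on one coordinate and using that $B^\top\mathbb{E}_i[\hat p_i]+R\hat u_i$ is $\mathcal{F}^i_t$-adapted, the former is the stationarity condition (\ref{stationarity condition of team followers}), and the latter is the convexity condition (\ref{convexity condition of team followers}). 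For the ``only if'' direction one first solves, for each $i$, the linear BSDE in (\ref{adjoint FBSDE of team followers}) with $\hat x_i$ inserted (which always admits an adapted solution), and then optimality of $\hat u$ forces both conditions; for ``if'', when (i)–(ii) hold the expansion gives $J^N_{soc}(\hat u+\varepsilon u;u_0)-J^N_{soc}(\hat u;u_0)=a(u)\varepsilon^2\ge 0$, so $\hat u$ is optimal over the decentralized set. This completes the proposed proof.
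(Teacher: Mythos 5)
Your proposal is correct and follows exactly the route the paper intends: the paper states Theorem \ref{team followers thm1} without proof, relying on the reader to adapt the variational argument of Theorem \ref{followers thm1} to a simultaneous perturbation of all $N$ controls, and your computation does precisely that, including the key bookkeeping step in which the cross terms involving $\Gamma\tilde{x}^{(N)}$ collapse into the matrices $Q_{\Gamma}$, $Q_{\Gamma_1}$, $Q_{\eta}$ that the paper defines for exactly this purpose. Nothing further is needed.
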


    Under (A3), the set of open-loop optimal strategies for followers is
    \begin{equation}\label{open-loop optimal strategies of team followers}
        \hat{u}_i = -R^{-1} B^\top \mathbb{E}_i[\hat{p}_i],\quad i = 1,\cdots,N.
    \end{equation}

    Setting conditional expectations $\mathbb{E}_i[\cdot]$ for $\hat{p}_i$, we get
    \begin{equation}\label{pi conditional expectation for team Fi}
        \begin{aligned}
            d\mathbb{E}_i[\hat{p}_i] = &-\left[A^\top \mathbb{E}_i[\hat{p}_i] + \left( Q - \frac{Q_\Gamma}{N} \right) \hat{x}_i - \frac{N-1}{N} Q_\Gamma \mathbb{E}[\hat{x}_i] - Q_{\Gamma_1} \mathbb{E}[x_0] - Q_{\eta} \right]dt \\
            &+ \mathbb{E}_i[\hat{q}_i^i] dW_i, \quad \mathbb{E}_i[\hat{p}_i(T)] = 0.
    \end{aligned}
    \end{equation}

    We consider the transformation
    \begin{equation}\label{transformation of team adjoint equation}
        \mathbb{E}_i[\hat{p}_i(\cdot)] = \hat{P}_N(\cdot)\hat{x}_i(\cdot) + \hat{K}_N(\cdot)\mathbb{E}[\hat{x}_i](\cdot) + \hat{\phi}_N(\cdot), \quad i = 1,\cdots,N,
    \end{equation}
    where $\hat{P}_N(\cdot)$, $\hat{K}_N(\cdot)$ and $\hat{\phi}_N(\cdot)$ are differential functions with $\hat{P}_N(T) = 0$, $\hat{K}_N(T) = 0$ and $\hat{\phi}_N(T) = 0$.
    Applying It\^{o}'s formula, and comparing the coefficients, we have
    $$\mathbb{E}_i[\hat{q}_i^i] = \hat{P}_ND,$$
    \begin{equation}\label{Riccati-1 of team followers}
        \dot{\hat{P}}_N + A^\top \hat{P}_N + \hat{P}_NA - \hat{P}_NBR^{-1}B^\top \hat{P}_N + Q - \frac{Q_\Gamma}{N} = 0,\quad \hat{P}_N(T) = 0,
    \end{equation}
    \begin{equation}\label{Riccati-2 of team followers}
        \begin{aligned}
            &\dot{\hat{K}}_N + A^\top \hat{K}_N + \hat{K}_NA - \hat{P}_NBR^{-1}B^\top \hat{K}_N - \hat{K}_NBR^{-1}B^\top (\hat{P}_N + \hat{K}_N) \\
            &\qquad - \frac{N-1}{N} Q_\Gamma = 0,\quad \hat{K}_N(T) = 0,
        \end{aligned}
    \end{equation}
    \begin{equation}\label{BODE-1 of team followers}
        \dot{\hat{\phi}}_N + \left[A^\top - (\hat{P}_N + \hat{K}_N)BR^{-1}B^\top \right] \hat{\phi}_N + (\hat{P}_N + \hat{K}_N)f - Q_{\Gamma_1} \mathbb{E}[x_0] - Q_{\eta} = 0,\quad \hat{\phi}_N(T) = 0.
    \end{equation}

    Let $\hat{\Pi}_N(\cdot) := \hat{P}_N(\cdot) + \hat{K}_N(\cdot)$, then $\hat{\Pi}_N(\cdot)$ satisfies
    \begin{equation}\label{Riccati-1+2 of team followers}
        \dot{\hat{\Pi}}_N + A^\top \hat{\Pi}_N + \hat{\Pi}_NA - \hat{\Pi}_NBR^{-1}B^\top \hat{\Pi}_N + Q - Q_\Gamma = 0,\quad \hat{\Pi}_N(T) = 0,
    \end{equation}
    thus (\ref{BODE-1 of team followers}) can be rewritten as
    \begin{equation}\label{BODE-2 of team followers}
        \dot{\hat{\phi}}_N + \left[A^\top - \hat{\Pi}_N BR^{-1}B^\top \right] \hat{\phi}_N + \hat{\Pi}_N f - Q_{\Gamma_1} \mathbb{E}[x_0] - Q_{\eta} = 0,\quad \hat{\phi}_N(T) = 0.
    \end{equation}

    \begin{Remark}
        Observe that (\ref{Riccati-1 of team followers}) is a symmetric Riccati differential equation. It is noted that if  (A3) is satisfied, a unique solution exists, given that
        $$Q - \frac{Q_\Gamma}{N} = \frac{N-1}{N}Q + \frac{1}{N} (I - \Gamma)^\top Q(I - \Gamma).$$
        Similarly, (\ref{Riccati-1+2 of team followers}) is a symmetric Riccati differential equation, if it satisfies (A3), then it admits a unique solution.
        Thus (\ref{Riccati-2 of team followers}) admits a unique solution.
        And then (\ref{BODE-2 of team followers}) admits a unique solution.
    \end{Remark}

    \begin{mythm}\label{team followers thm2}
        Under Assumptions (A1)-(A3), for given $u_0(\cdot) \in \mathscr{U}_{d,0}[0,T]$, Problem {\bf (PS1)} admits a unique solution
        \begin{equation}\label{optimal strategy of team followers}
            \hat{u}_i = -R^{-1} B^\top \left( \hat{P}_N\hat{x}_i + \hat{K}_N\mathbb{E}[\hat{x}_i] + \hat{\phi}_N \right), \quad i = 1,\cdots,N,
        \end{equation}
        where
        $$d\hat{x}_i = \left[(A - BR^{-1}B^\top \hat{P}_N) \hat{x}_i - BR^{-1}B^\top \hat{K}_N \mathbb{E}[\hat{x}_i] + f - BR^{-1}B^\top \hat{\phi}_N \right] dt + DdW_i, \quad \hat{x}_i(0) = \xi_i,$$
        $$d\mathbb{E}[\hat{x}_i] = \left[(A - BR^{-1}B^\top \hat{\Pi}_N) \mathbb{E}[\hat{x}_i] + f - BR^{-1}B^\top \hat{\phi}_N \right] dt, \quad \mathbb{E}[\hat{x}_i(0)] = \bar{\xi}.$$
    \end{mythm}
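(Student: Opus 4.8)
The plan is to obtain Theorem~\ref{team followers thm2} by the same de-aggregation scheme that carried Subsection~3.1 from Theorem~\ref{followers thm1} to Theorem~\ref{followers thm2}, now applied to the social-cost maximum principle of Theorem~\ref{team followers thm1}. Two ingredients are needed: (a) well-posedness of the decoupled Hamiltonian system generated by the affine ansatz (\ref{transformation of team adjoint equation}), and (b) verification that the candidate control simultaneously realizes the stationarity condition (\ref{stationarity condition of team followers}) and the convexity condition (\ref{convexity condition of team followers}), so that Theorem~\ref{team followers thm1} applies.

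I would first fix $u_0(\cdot)\in\mathscr{U}_{d,0}[0,T]$, hence $x_0(\cdot)$, and invoke (A3) to pass from (\ref{stationarity condition of team followers}) to the open-loop form $\hat u_i=-R^{-1}B^\top\mathbb{E}_i[\hat p_i]$ as in (\ref{open-loop optimal strategies of team followers}). Taking $\mathbb{E}_i[\cdot]$ in the backward equation of (\ref{adjoint FBSDE of team followers}), using exchangeability (the team analogue of (\ref{conditional expectation for Fi})) to de-aggregate $\mathbb{E}_i[\hat x^{(N)}]=\frac{1}{N}\hat x_i+\frac{N-1}{N}\mathbb{E}[\hat x_i]$, and Lemma~5.4 of \cite{Xiong-08} to annihilate the $j\neq i$ stochastic integrals, produces (\ref{pi conditional expectation for team Fi}). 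Substituting the ansatz (\ref{transformation of team adjoint equation}) and applying It\^o's formula to $\hat P_N\hat x_i+\hat K_N\mathbb{E}[\hat x_i]+\hat\phi_N$, then matching the coefficients of $\hat x_i$, of $\mathbb{E}[\hat x_i]$, and of the constant term, yields exactly the Riccati equations (\ref{Riccati-1 of team followers}), (\ref{Riccati-2 of team followers}) and the linear ODE (\ref{BODE-1 of team followers}). The identities $Q-\frac{Q_\Gamma}{N}=\frac{N-1}{N}Q+\frac{1}{N}(I-\Gamma)^\top Q(I-\Gamma)\ge 0$ and $Q-Q_\Gamma=(I-\Gamma)^\top Q(I-\Gamma)\ge 0$, both valid under (A3), make (\ref{Riccati-1 of team followers}) and (\ref{Riccati-1+2 of team followers}) symmetric and solvable by standard LQ Riccati theory, hence so are (\ref{Riccati-2 of team followers}) and (\ref{BODE-2 of team followers}), as recorded in the Remark preceding the theorem. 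With these deterministic, $i$-independent coefficients fixed, $\hat u_i=-R^{-1}B^\top(\hat P_N\hat x_i+\hat K_N\mathbb{E}[\hat x_i]+\hat\phi_N)$ is $\mathcal{F}_t^i$-adapted and therefore admissible, the two closed-loop equations in the statement follow by substitution and by taking expectations, and Theorem~4.1 of \cite{Ma-Yong-99} gives a unique adapted solution of the Hamiltonian FBSDE; thus $(\hat x_i,\hat p_i,\hat q_i^j)$ together with (\ref{stationarity condition of team followers}) is indeed realized by this candidate.

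It then remains to check condition (ii) of Theorem~\ref{team followers thm1}: since $Q\ge 0$ and $R>0$ under (A3), the functional $\sum_{i=1}^N\mathbb{E}\int_0^T(\|\tilde x_i-\Gamma\tilde x^{(N)}\|_Q^2+\|u_i\|_R^2)\,dt$ is nonnegative for every perturbation, so (\ref{convexity condition of team followers}) holds, and strict positivity for $u_i\not\equiv 0$ gives uniqueness of the minimizer. Applying Theorem~\ref{team followers thm1} then identifies $\hat u_i$ as the unique optimal control for (PS1). The step I expect to require the most care is the self-consistency of the de-aggregation: one must confirm that the affine ansatz (\ref{transformation of team adjoint equation}), whose coefficients are \emph{forced} by coefficient matching, genuinely reproduces (\ref{pi conditional expectation for team Fi})---in particular that the term $-\frac{N-1}{N}Q_\Gamma\mathbb{E}[\hat x_i]$ is accounted for through the $\hat K_N$-dynamics and that $\mathbb{E}[\hat x_i]$ closes into its own autonomous linear equation driven by $\hat\Pi_N$ rather than remaining coupled to $\hat x_i$; everything else is the routine LQ-FBSDE bookkeeping already displayed in Subsection~3.1.
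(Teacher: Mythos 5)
Your proposal is correct and follows essentially the same route as the paper: pass to the open-loop form via (A3), take $\mathbb{E}_i[\cdot]$ of the adjoint equation and de-aggregate $\hat{x}^{(N)}$ by exchangeability, impose the affine ansatz (\ref{transformation of team adjoint equation}), match coefficients to obtain (\ref{Riccati-1 of team followers})--(\ref{BODE-1 of team followers}), and use the identities $Q-\frac{Q_\Gamma}{N}=\frac{N-1}{N}Q+\frac{1}{N}(I-\Gamma)^\top Q(I-\Gamma)$ and $Q-Q_\Gamma=(I-\Gamma)^\top Q(I-\Gamma)$ together with Theorem 4.1 of \cite{Ma-Yong-99} for solvability, exactly as in the paper's Remark and its parallel treatment in Subsection 3.1. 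Your additional explicit verification of the convexity condition (\ref{convexity condition of team followers}) and of the closure of the $\mathbb{E}[\hat{x}_i]$ dynamics is consistent with what the paper leaves implicit.
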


    \subsection{Optimal strategy of the leader}

    After the followers have implemented their strategies $\hat{u}_i(\cdot), i = 1,\cdots,N$ according to (\ref{optimal strategy of team followers}), we proceed to investigate an optimal control problem for the leader.

    {\bf (PS2)}: Minimize $\hat{J}^N_0(u_0(\cdot))$ over $u_0(\cdot) \in \mathscr{U}_{d,0}[0,T]$, where
    \begin{equation}\label{cost of team leader}
        \hat{J}^N_0(u_0(\cdot)) := \frac{1}{2} \mathbb{E} \int_0^T \left[||x_0(t) - \Gamma_0\hat{x}^{(N)}(t) - \eta_0||^2_{Q_0} + ||u_0(t)||^2_{R_0}\right]dt,
    \end{equation}
    subject to
    \begin{equation}\label{state of team leader}
        \left\{
        \begin{aligned}
            dx_0 =& \left[A_0x_0 + B_0u_0 + f_0\right]dt + D_0dW_0,\\
            d\hat{x}_i =& \left[(A - BR^{-1}B^\top \hat{P}_N) \hat{x}_i - BR^{-1}B^\top \hat{K}_N \mathbb{E}[\hat{x}_i] + f - BR^{-1}B^\top \hat{\phi}_N \right] dt\\
            &\ + DdW_i, \quad \hat{x}_i(0) = \xi_i,\\
            d\hat{\phi}_N =& - \left[\left(A^\top - \hat{\Pi}_N BR^{-1}B^\top \right) \check{\phi}_N + \hat{\Pi}_N f - Q_{\Gamma_1} \mathbb{E}[x_0] - Q_{\eta} \right] dt,\\
            x_0(0) =&\ \xi_0, \quad \hat{x}_i(0) = \xi_i,\quad \hat{\phi}_N(T) = 0, \quad i = 1,\cdots,N.
        \end{aligned}
        \right.
    \end{equation}

    We have the following result.

    \begin{mythm}\label{team leader thm1}
        Under Assumptions (A1)-(A3), let the followers adopt the optimal strategy (\ref{optimal strategy of team followers}). Then Problem {\bf (PS2)} admits an optimal control $\hat{u}_0(\cdot) \in \mathscr{U}_{d,0}[0,T]$, if and only if the following two conditions hold:

        (i) The adapted solution $\left(\hat{x}_0(\cdot), \hat{x}^{(N)}(\cdot), \hat{\phi}_N(\cdot), \hat{y}_0(\cdot), \hat{z}_0(\cdot), \hat{z}(\cdot), \hat{y}^{(N)}(\cdot), \hat{z}_0^{(N)}(\cdot), \hat{z}^{(N)}(\cdot), \hat{\psi}_N(\cdot) \right)$ to the FBSDE
        \begin{equation}\label{adjoint FBSDE of team leader}
            \left\{
            \begin{aligned}
                d\hat{x}_0 = & \left[ A_0\hat{x}_0 + B_0\hat{u}_0 + f_0 \right]dt + D_0dW_0, \quad \hat{x}_0(0) = \xi_0, \\
                d\hat{x}^{(N)} = & \left[ (A - BR^{-1}B^\top \hat{P}_N) \hat{x}^{(N)} - BR^{-1}B^\top (\hat{\Pi}_N - \hat{P}_N) \mathbb{E}[\hat{x}^{(N)}] + f - BR^{-1}B^\top \hat{\phi}_N \right] dt \\
                &+ DdW^{(N)}, \quad \hat{x}^{(N)}(0) = \xi^{(N)},\\
                d\hat{\phi}_N = &\ -\left[ (A^\top - \hat{\Pi}_NBR^{-1}B^\top)\hat{\phi}_N + \hat{\Pi}_Nf - Q_{\Gamma_1} \mathbb{E}[\hat{x}_0] - Q_{\eta} \right]dt, \quad \hat{\phi}_N(T) = 0, \\
                d\hat{y}_0 = &\ -\left[ A_0^\top \hat{y}_0 + Q_{\Gamma_1}^\top \mathbb{E}[\hat{\psi}_N] + Q_0(\hat{x}_0 - \Gamma_0 \hat{x}^{(N)} - \eta_0) \right]dt \\
                &\ + \hat{z}_0 dW_0 + \hat{z} dW^{(N)}, \quad \hat{y}_0(T) = 0, \\
                d\hat{y}^{(N)} = &\ -\big[ (A - BR^{-1}B^\top \hat{P}_N)^\top \hat{y}^{(N)} - (\hat{\Pi}_N - \hat{P}_N)BR^{-1}B^\top \mathbb{E}[\hat{y}^{(N)}] \\
                &\ - \Gamma_0^\top Q_0(\hat{x}_0 - \Gamma_0\hat{x}^{(N)} - \eta_0) \big]dt + \hat{z}_0^{(N)} dW_0 + \hat{z}^{(N)} dW^{(N)}, \quad \hat{y}^{(N)}(T) = 0, \\
                d\hat{\psi}_N = &\ \left[ BR^{-1}B^\top \hat{y}^{(N)} + (A - BR^{-1}B^\top \hat{\Pi}_N)\hat{\psi}_N \right]dt, \quad \hat{\psi}_N(0) = 0,
            \end{aligned}
            \right.
        \end{equation}
        satisfies the following stationarity condition:
        \begin{equation}\label{stationarity condition of team leader}
            B_0^\top \mathbb{E}_0[\hat{y}_0] + R_0\hat{u}_0 = 0.
        \end{equation}

        (ii) The following convexity condition holds:
        \begin{equation}\label{convexity condition of team leader}
            \mathbb{E} \int_0^T \left[ ||\tilde{x}_0 - \Gamma_0\tilde{x}^{(N)}||_{Q_0}^2 + ||u_0||_{R_0}^2\right]dt \ge 0, \quad \forall u_0(\cdot) \in \mathscr{U}_{d,0}[0,T],
        \end{equation}
        where $(\tilde{x}_0(\cdot), \tilde{x}^{(N)}(\cdot), \tilde{\phi}_N(\cdot))$ is the solution to the following FBSDE
        \begin{equation}\label{random FBSDE of team leader}
            \left\{
            \begin{aligned}
                d\tilde{x}_0 = &\ \left[ A_0\tilde{x}_0 + B_0u_0 \right]dt, \quad \tilde{x}_0(0) = 0, \\
                d\tilde{x}^{(N)} = &\ \left[ (A - BR^{-1}B^\top P_N)\tilde{x}^{(N)} - BR^{-1}B^\top (\Pi_N - P_N) \mathbb{E}[\tilde{x}^{(N)}] - BR^{-1}B^\top \tilde{\phi}_N \right]dt,\\
                & \ \tilde{x}^{(N)}(0) = 0, \\
                d\tilde{\phi}_N = & -\left[(A^\top - \Pi_NBR^{-1}B^\top)\tilde{\phi}_N - Q_{\Gamma_1} \mathbb{E}[\tilde{x}_0] \right]dt, \quad \tilde{\phi}_N(T) = 0.
            \end{aligned}
            \right.
        \end{equation}
    \end{mythm}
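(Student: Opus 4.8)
The plan is to repeat, almost verbatim, the variational argument used to prove Theorem~\ref{leader thm1}, since Problem~\textbf{(PS2)} is again a high-dimensional LQ control problem for the leader once the team-optimal feedback (\ref{optimal strategy of team followers}) has been inserted into the followers' dynamics. First I would fix a candidate $\hat{u}_0(\cdot)\in\mathscr{U}_{d,0}[0,T]$ together with an adapted solution of the FBSDE (\ref{adjoint FBSDE of team leader}), take an arbitrary perturbation $u_0(\cdot)\in\mathscr{U}_{d,0}[0,T]$ and $\varepsilon\in\mathbb{R}$, and solve the forward--backward state system of (\ref{state of team leader}) driven by $\hat{u}_0+\varepsilon u_0$. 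Since the coefficients of $(x_0,\hat{x}^{(N)},\hat{\phi}_N)$ are affine in $u_0$, the perturbed trajectory splits exactly as $x_0^{\varepsilon}=\hat{x}_0+\varepsilon\tilde{x}_0$, $x^{\varepsilon(N)}=\hat{x}^{(N)}+\varepsilon\tilde{x}^{(N)}$, $\phi_N^{\varepsilon}=\hat{\phi}_N+\varepsilon\tilde{\phi}_N$, where $(\tilde{x}_0,\tilde{x}^{(N)},\tilde{\phi}_N)$ solves (\ref{random FBSDE of team leader}); this linear FBSDE is well posed because it is only one-directionally coupled ($\tilde{x}_0$ feeds $\tilde{\phi}_N$, which feeds $\tilde{x}^{(N)}$), so existence and uniqueness are immediate.

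Next I would expand $\hat{J}^N_0(\hat{u}_0+\varepsilon u_0)-\hat{J}^N_0(\hat{u}_0)$ by completing the square in the integrand of (\ref{cost of team leader}), obtaining $\tfrac12\varepsilon^2 I_2(u_0)+\varepsilon I_1(u_0)$ with $I_2(u_0)=\mathbb{E}\int_0^T[||\tilde{x}_0-\Gamma_0\tilde{x}^{(N)}||^2_{Q_0}+||u_0||^2_{R_0}]dt$ and $I_1(u_0)=\mathbb{E}\int_0^T[\langle Q_0(\hat{x}_0-\Gamma_0\hat{x}^{(N)}-\eta_0),\tilde{x}_0-\Gamma_0\tilde{x}^{(N)}\rangle+\langle R_0\hat{u}_0,u_0\rangle]dt$. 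In parallel I would apply It\^o's formula to $\langle\hat{y}_0,\tilde{x}_0\rangle+\langle\hat{y}^{(N)},\tilde{x}^{(N)}\rangle+\langle\hat{\psi}_N,\tilde{\phi}_N\rangle$ on $[0,T]$; the boundary data $\hat{y}_0(T)=\hat{y}^{(N)}(T)=0$, $\tilde{x}_0(0)=\tilde{x}^{(N)}(0)=0$, $\hat{\psi}_N(0)=0$, $\tilde{\phi}_N(T)=0$ annihilate every endpoint term, so the time-integrated drift has zero expectation. The three adjoint equations in (\ref{adjoint FBSDE of team leader}) are designed so that all $\tilde{x}_0$-, $\tilde{x}^{(N)}$- and $\tilde{\phi}_N$-cross terms cancel: the drift matrices $A_0$, $A-BR^{-1}B^\top\hat{P}_N$, $A-BR^{-1}B^\top\hat{\Pi}_N$ pair with their transposes; the $\Gamma_0^\top Q_0$-term in $d\hat{y}^{(N)}$ reproduces the $\hat{x}^{(N)}$-part of the cost gradient; the $BR^{-1}B^\top\hat{y}^{(N)}$-term in $d\hat{\psi}_N$ cancels the $-BR^{-1}B^\top\tilde{\phi}_N$-term in $d\tilde{x}^{(N)}$; the mean-field terms $(\hat{\Pi}_N-\hat{P}_N)BR^{-1}B^\top\mathbb{E}[\hat{y}^{(N)}]$ and $BR^{-1}B^\top(\hat{\Pi}_N-\hat{P}_N)\mathbb{E}[\tilde{x}^{(N)}]$ cancel in expectation because $\hat{\Pi}_N-\hat{P}_N$ and $BR^{-1}B^\top$ are symmetric and the conditional means are deterministic; and the $Q_{\Gamma_1}^\top\mathbb{E}[\hat{\psi}_N]$-term in $d\hat{y}_0$ cancels the $Q_{\Gamma_1}\mathbb{E}[\tilde{x}_0]$-term in $d\tilde{\phi}_N$ since $\mathbb{E}\langle\hat{\psi}_N,Q_{\Gamma_1}\mathbb{E}[\tilde{x}_0]\rangle=\langle\mathbb{E}[\hat{\psi}_N],Q_{\Gamma_1}\mathbb{E}[\tilde{x}_0]\rangle$. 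What remains is $0=\mathbb{E}\int_0^T[\langle B_0^\top\hat{y}_0,u_0\rangle-\langle Q_0(\hat{x}_0-\Gamma_0\hat{x}^{(N)}-\eta_0),\tilde{x}_0-\Gamma_0\tilde{x}^{(N)}\rangle]dt$, and because $u_0(\cdot)$ is $\mathcal{F}^0_t$-adapted we may replace $B_0^\top\hat{y}_0$ by $B_0^\top\mathbb{E}_0[\hat{y}_0]$ in the pairing, so that $I_1(u_0)=\mathbb{E}\int_0^T\langle B_0^\top\mathbb{E}_0[\hat{y}_0]+R_0\hat{u}_0,u_0\rangle dt$.

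Finally I would combine the two computations: $\hat{J}^N_0(\hat{u}_0+\varepsilon u_0)\ge\hat{J}^N_0(\hat{u}_0)$ for every $u_0(\cdot)\in\mathscr{U}_{d,0}[0,T]$ and every $\varepsilon\in\mathbb{R}$ holds if and only if $I_1\equiv0$ --- i.e. the stationarity condition (\ref{stationarity condition of team leader}) --- and $I_2\ge0$ --- i.e. the convexity condition (\ref{convexity condition of team leader}); the necessity of the first is the usual first-order argument, that of the second follows by letting $\varepsilon\to\infty$ along a direction violating positivity, and sufficiency is immediate from the quadratic decomposition. I expect the only genuinely delicate point to be the It\^o bookkeeping: one must verify that every conditional-mean mean-field object $\mathbb{E}[\hat{x}^{(N)}]$, $\mathbb{E}[\hat{y}^{(N)}]$, $\mathbb{E}[\hat{\psi}_N]$ is matched against the expectation of the corresponding linearized variable and that the non-symmetric feedback matrices enter $d\hat{y}^{(N)}$ and $d\hat{\psi}_N$ with exactly the right transposes; once the adjoint FBSDE (\ref{adjoint FBSDE of team leader}) is written as above, the computation is formally identical to that in the proof of Theorem~\ref{leader thm1}, with the $1/N$-corrected coupling matrices of the game case replaced by the cooperative coupling matrices $Q_\Gamma$, $Q_{\Gamma_1}$, $Q_\eta$.
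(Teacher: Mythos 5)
Your proposal is correct and is essentially the proof the paper intends: the paper omits the argument for Theorem~\ref{team leader thm1} as being analogous to that of Theorem~\ref{leader thm1}, and your variational expansion plus the It\^{o} duality applied to $\langle\hat{y}_0,\tilde{x}_0\rangle+\langle\hat{y}^{(N)},\tilde{x}^{(N)}\rangle+\langle\hat{\psi}_N,\tilde{\phi}_N\rangle$ is exactly that analogue, with the cancellations you identify (symmetry of $\hat{\Pi}_N-\hat{P}_N$, the $Q_{\Gamma_1}$ pairing through expectations) all checking out. The only cosmetic remark is that the matrices $P_N,\Pi_N$ appearing in (\ref{random FBSDE of team leader}) should be read as $\hat{P}_N,\hat{\Pi}_N$, as you implicitly and correctly do.
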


    Under the premise of (A4), we are able to deduce that the optimal control for the leader is
    \begin{equation}\label{open-loop optimal strategies of team leader}
        \hat{u}_0 = -R_0^{-1}B_0^\top \mathbb{E}_0[\hat{y}_0].
    \end{equation}
    Hence, the associated Hamiltonian system can be expressed as
    \begin{equation}\label{Hamiltonian system of team leader}
        \left\{
        \begin{aligned}
            d\hat{x}_0 = & \left[ A_0\hat{x}_0 - B_0R_0^{-1}B_0^\top \mathbb{E}_0[\hat{y}_0] + f_0 \right]dt + D_0dW_0, \quad \hat{x}_0(0) = \xi_0, \\
            d\hat{x}^{(N)} = & \left[ (A - BR^{-1}B^\top \hat{P}_N) \hat{x}^{(N)} - BR^{-1}B^\top (\hat{\Pi}_N - \hat{P}_N) \mathbb{E}[\hat{x}^{(N)}] + f - BR^{-1}B^\top \hat{\phi}_N \right] dt \\
            &+ DdW^{(N)}, \quad \hat{x}^{(N)}(0) = \xi^{(N)},\\
            d\hat{\phi}_N = &\ -\left[ (A^\top - \hat{\Pi}_NBR^{-1}B^\top)\hat{\phi}_N + \hat{\Pi}_Nf - Q_{\Gamma_1} \mathbb{E}[\hat{x}_0] - Q_{\eta} \right]dt, \quad \hat{\phi}_N(T) = 0, \\
            d\hat{y}_0 = &\ -\left[ A_0^\top \hat{y}_0 + Q_{\Gamma_1}^\top \mathbb{E}[\hat{\psi}_N] + Q_0(\hat{x}_0 - \Gamma_0 \hat{x}^{(N)} - \eta_0) \right]dt \\
            &\ + \hat{z}_0 dW_0 + \hat{z} dW^{(N)}, \quad \hat{y}_0(T) = 0, \\
            d\hat{y}^{(N)} = &\ -\big[ (A - BR^{-1}B^\top \hat{P}_N)^\top \hat{y}^{(N)} - (\hat{\Pi}_N - \hat{P}_N)BR^{-1}B^\top \mathbb{E}[\hat{y}^{(N)}] \\
            &\ - \Gamma_0^\top Q_0(\hat{x}_0 - \Gamma_0\hat{x}^{(N)} - \eta_0) \big]dt + \hat{z}_0^{(N)} dW_0 + \hat{z}^{(N)} dW^{(N)}, \quad \hat{y}^{(N)}(T) = 0, \\
            d\hat{\psi}_N = &\ \left[ BR^{-1}B^\top \hat{y}^{(N)} + (A - BR^{-1}B^\top \hat{\Pi}_N)\hat{\psi}_N \right]dt, \quad \hat{\psi}_N(0) = 0.
        \end{aligned}
        \right.
    \end{equation}

    Because of the appearance of the $\mathbb{E}_0[\hat{y}_0]$ term in (\ref{open-loop optimal strategies of team leader}), we must derive the equation for $\mathbb{E}_0[\hat{y}_0]$.
    Set conditional expectations $\mathbb{E}_0[\cdot]$ for (\ref{Hamiltonian system of team leader}), then we have
    \begin{equation}\label{conditional Hamiltonian system of team leader}
        \left\{
        \begin{aligned}
            d\hat{x}_0 = & \left[ A_0\hat{x}_0 - B_0R_0^{-1}B_0^\top \mathbb{E}_0[\hat{y}_0] + f_0 \right]dt + D_0dW_0, \quad \hat{x}_0(0) = \xi_0, \\
            d\mathbb{E}[\hat{x}^{(N)}] = & \left[ (A - BR^{-1}B^\top \hat{\Pi}_N) \mathbb{E}[\hat{x}^{(N)}] + f - BR^{-1}B^\top \hat{\phi}_N \right] dt, \quad \mathbb{E}_0[\hat{x}^{(N)}(0)] = \bar{\xi},\\
            d\hat{\phi}_N = &\ -\left[ (A^\top - \hat{\Pi}_NBR^{-1}B^\top)\hat{\phi}_N + \hat{\Pi}_Nf - Q_{\Gamma_1} \mathbb{E}[\hat{x}_0] - Q_{\eta} \right]dt, \quad \hat{\phi}_N(T) = 0, \\
            d\mathbb{E}_0[\hat{y}_0] = &\ -\left[ A_0^\top \mathbb{E}_0[\hat{y}_0] + Q_{\Gamma_1}^\top \mathbb{E}[\hat{\psi}_N] + Q_0(\hat{x}_0 - \Gamma_0 \mathbb{E}[\hat{x}^{(N)}] - \eta_0) \right]dt \\
            &\ + \mathbb{E}_0[\hat{z}_0] dW_0, \quad \mathbb{E}_0[\hat{y}_0(T)] = 0, \\
            d\mathbb{E}_0[\hat{y}^{(N)}] = &\ -\big[ (A - BR^{-1}B^\top \hat{P}_N)^\top \mathbb{E}_0[\hat{y}^{(N)}] - (\hat{\Pi}_N - \hat{P}_N)BR^{-1}B^\top \mathbb{E}[\hat{y}^{(N)}] \\
            &\ - \Gamma_0^\top Q_0(\hat{x}_0 - \Gamma_0\mathbb{E}[\hat{x}^{(N)}] - \eta_0) \big]dt + \mathbb{E}[\hat{z}_0^{(N)}] dW_0, \quad \hat{y}^{(N)}(T) = 0, \\
            d\mathbb{E}_0[\hat{\psi}_N] = &\ \left[ BR^{-1}B^\top \mathbb{E}_0[\hat{y}^{(N)}] + (A - BR^{-1}B^\top \hat{\Pi}_N)\mathbb{E}_0[\hat{\psi}_N] \right]dt, \quad \mathbb{E}_0[\hat{\psi}_N(0)] = 0,
        \end{aligned}
        \right.
    \end{equation}

    Denote
    \begin{equation*}
        \begin{aligned}
            \hat{X} :=&
            \begin{bmatrix}
                \hat{x}_0 \\ \mathbb{E}[\hat{x}^{(N)}] \\ \mathbb{E}_0[\hat{\psi}_N]
            \end{bmatrix}, \quad
            \hat{Y} :=
            \begin{bmatrix}
                \mathbb{E}_0[\hat{y}_0] \\ \mathbb{E}_0[\hat{y}^{(N)}] \\ \hat{\phi}_N
            \end{bmatrix}, \quad
            \hat{Z} :=
            \begin{bmatrix}
                \mathbb{E}_0[\hat{z}_0] \\ \mathbb{E}_0[\hat{z}_0^{(N)}] \\ 0
            \end{bmatrix},\\
            \hat{\mathcal{D}} :=&
            \begin{bmatrix}
                D_0 \\ 0 \\ 0
            \end{bmatrix},\quad
            \hat{\mathfrak{f}} :=
            \begin{bmatrix}
                f_0 \\ f \\ 0
            \end{bmatrix},\quad
            \hat{\mathfrak{f}}_1 :=
            \begin{bmatrix}
                Q_0 \eta_0 \\ - \Gamma_0^\top Q_0 \eta_0 \\ Q_{\eta} - \hat{\Pi}_N f
            \end{bmatrix},\\
            \hat{\mathcal{A}} :=&
            \begin{bmatrix}
                A_0 & 0 & 0 \\
                0 & A - BR^{-1}B^\top \hat{\Pi}_N & 0 \\
                0 & 0 & A - BR^{-1}B^\top \hat{\Pi}_N \\
            \end{bmatrix},\\
         \end{aligned}
    \end{equation*}
    \begin{equation*}
        \begin{aligned}
            \hat{\mathcal{B}} :=&
            \begin{bmatrix}
                - B_0R_0^{-1}B_0^\top & 0 & 0 \\
                0 & 0 & -BR^{-1}B^\top \\
                0 & -BR^{-1}B^\top & 0 \\
            \end{bmatrix},\\
            \hat{\mathcal{A}}_1 :=&
            \begin{bmatrix}
                - Q_0 & Q_0 & 0 \\
                \Gamma_0^\top Q_0 & -\Gamma_0^\top Q_0 & 0 \\
                0 & 0 & 0 \\
            \end{bmatrix},\quad
            \hat{\mathcal{A}}_2 :=
            \begin{bmatrix}
                0 & 0 & -Q_{\Gamma_1}^\top \\
                0 & 0 & 0 \\
                Q_{\Gamma_1} & 0 & 0 \\
            \end{bmatrix},\\
            \hat{\mathcal{B}}_1 :=&
            \begin{bmatrix}
                - A_0^\top & 0 & 0 \\
                0 & - A^\top + \hat{P}_NBR^{-1}B^\top & 0 \\
                0 & 0 & - A + \hat{\Pi}_NBR^{-1}B^\top \\
            \end{bmatrix},\\
            \hat{\mathcal{B}}_2 :=&
            \begin{bmatrix}
                0 & 0 & 0 \\
                0 & (\hat{\Pi}_N - \hat{P}_N)BR^{-1}B^\top & 0 \\
                0 & 0 & 0 \\
            \end{bmatrix},
        \end{aligned}
    \end{equation*}

    With the above notions, we can rewrite (\ref{conditional Hamiltonian system of team leader}) as
    \begin{equation}\label{extended dimensional Hamiltonian system of team leader}
        \left\{
        \begin{aligned}
            d\hat{X} &= \big[ \hat{\mathcal{A}} \hat{X} + \hat{\mathcal{B}} \hat{Y} + \hat{\mathfrak{f}} \big] dt + \hat{\mathcal{D}} dW_0,\quad \hat{X}(0) = [\xi_0^\top, \bar{\xi}^\top, 0]^\top, \\
            d\hat{Y} &= \big[ \hat{\mathcal{A}}_1 \hat{X} + \hat{\mathcal{B}}_1 \hat{Y} + \hat{\mathcal{A}}_2 \mathbb{E}[\hat{X}] + \hat{\mathcal{B}}_2 \mathbb{E}[\hat{Y}] + \hat{\mathfrak{f}}_1 \big] dt + \hat{Z} dW_0,\quad \hat{Y}(T) = 0.
        \end{aligned}
        \right.
    \end{equation}

    Suppose $(\hat{X}(\cdot), \hat{Y}(\cdot), \hat{Z}(\cdot))$ is an adapted solution to (\ref{extended dimensional Hamiltonian system of team leader}). We assume that $\hat{X}(\cdot)$ and $\hat{Y}(\cdot)$ are related by the following affine transformation
    \begin{equation}
        \hat{Y}(\cdot) = \hat{\mathcal{P}}(\cdot) \hat{X}(\cdot) + \hat{\mathcal{K}}(\cdot) \mathbb{E}[\hat{X}(\cdot)] + \hat{\mathcal{V}}(\cdot),
    \end{equation}
    where $\hat{\mathcal{P}}(\cdot)$, $\hat{\mathcal{K}}(\cdot)$ and $\hat{\mathcal{V}}(\cdot)$ are both differentiable functions, with $\hat{\mathcal{P}}(T) = 0$, $\hat{\mathcal{K}}(T) = 0$ and $\hat{\mathcal{V}}(T) = 0$.
    Next, by It\^{o}'s formula, we have
    \begin{equation}\label{Ito formula of extended dimensional Hamiltonian system of team leader}
        \begin{aligned}
            d\hat{Y} = &\big[ \dot{\hat{\mathcal{P}}} \hat{X} + \dot{\hat{\mathcal{K}}} \mathbb{E}[\hat{X}] + \dot{\hat{\mathcal{V}}} \big] dt + \hat{\mathcal{P}} \big[ \hat{\mathcal{A}} \hat{X} + \hat{\mathcal{B}} (\hat{\mathcal{P}} \hat{X} + \hat{\mathcal{K}} \mathbb{E}[\hat{X}] + \hat{\mathcal{V}}) + \hat{\mathfrak{f}} \big] dt + \hat{\mathcal{P}} \hat{\mathcal{D}}dW_0 \\
            &+ \hat{\mathcal{K}} \big[ \hat{\mathcal{A}} \mathbb{E}[\hat{X}] + \hat{\mathcal{B}} ((\hat{\mathcal{P}} + \hat{\mathcal{K}}) \mathbb{E}[\hat{X}] + \hat{\mathcal{V}}) + \hat{\mathfrak{f}} \big] dt.
        \end{aligned}
    \end{equation}
    Now, comparing (\ref{Ito formula of extended dimensional Hamiltonian system of team leader}) with the secend equation in (\ref{extended dimensional Hamiltonian system of team leader}), it follows that
    \begin{equation*}
        \hat{Z} = \hat{\mathcal{P}} \hat{\mathcal{D}},
    \end{equation*}
    \begin{equation}\label{Riccati-1 of team leader}
        \dot{\hat{\mathcal{P}}} + \hat{\mathcal{P}} \hat{\mathcal{A}} + \hat{\mathcal{P}} \hat{\mathcal{B}} \hat{\mathcal{P}} - \hat{\mathcal{A}}_1 - \hat{\mathcal{B}}_1 \hat{\mathcal{P}} = 0, \quad \hat{\mathcal{P}}(T) = 0,
    \end{equation}
    \begin{equation}\label{Riccati-2 of team leader}
        \dot{\hat{\mathcal{K}}} + \hat{\mathcal{P}}\hat{\mathcal{B}}\hat{\mathcal{K}} + \hat{\mathcal{K}}\hat{\mathcal{A}} + \hat{\mathcal{K}}\hat{\mathcal{B}}(\hat{\mathcal{P}}+\hat{\mathcal{K}}) - \hat{\mathcal{B}}_1 \hat{\mathcal{K}} + \hat{\mathcal{A}}_2 + \hat{\mathcal{B}}_2(\hat{\mathcal{P}}+\hat{\mathcal{K}}) = 0, \quad \hat{\mathcal{K}}(T) = 0,
    \end{equation}
    \begin{equation}\label{BODE-1 of team leader}
        \dot{\hat{\mathcal{V}}} + (\hat{\mathcal{P}}+\hat{\mathcal{K}})\hat{\mathcal{B}}(\hat{\mathcal{V}}+\hat{\mathfrak{f}}) - \hat{\mathcal{B}}_1 \hat{\mathcal{V}} + \hat{\mathcal{B}}_2 \hat{\mathcal{V}} + \hat{\mathfrak{f}}_1 = 0, \quad \hat{\mathcal{V}}(T) = 0.
    \end{equation}

    Let $\hat{\mathcal{M}}(\cdot) := \hat{\mathcal{P}}(\cdot) + \hat{\mathcal{K}}(\cdot)$, then $\hat{\mathcal{M}}(\cdot)$ satisfies
    \begin{equation}\label{Riccati-1+2 of team leader}
        \dot{\hat{\mathcal{M}}} + \hat{\mathcal{M}}\hat{\mathcal{A}} - \hat{\mathcal{B}}_1 \hat{\mathcal{M}} + \hat{\mathcal{M}}\hat{\mathcal{B}}\hat{\mathcal{M}} + \hat{\mathcal{B}}_2 \hat{\mathcal{M}} + \hat{\mathcal{A}}_2 - \hat{\mathcal{A}}_1 = 0, \quad \hat{\mathcal{M}}(T) = 0,
    \end{equation}
    thus (\ref{BODE-1 of team leader}) can be rewritten as
    \begin{equation}\label{BODE-2 of team leader}
        \dot{\hat{\mathcal{V}}} + \hat{\mathcal{M}}\hat{\mathcal{B}}(\hat{\mathcal{V}}+\hat{\mathfrak{f}}) - \hat{\mathcal{B}}_1 \hat{\mathcal{V}} + \hat{\mathcal{B}}_2 \hat{\mathcal{V}} + \hat{\mathfrak{f}}_1 = 0, \quad \hat{\mathcal{V}}(T) = 0.
    \end{equation}

    \begin{mythm}\label{team leader thm2}
        Under Assumptions (A1)-(A4), let the followers adopt the optimal strategy (\ref{optimal strategy of team followers}), if (\ref{Riccati-1 of team leader}) and (\ref{Riccati-2 of team leader}) admit a solution $\hat{\mathcal{P}}(\cdot)$, $\hat{\mathcal{M}}(\cdot)$, respectively, then Problem {\bf (PS2)} admits a unique solution
        \begin{equation}\label{optimal strategy of team leader}
            \hat{u}_0 = -R_0^{-1} B_0^\top e_1 \left( \hat{\mathcal{P}} \hat{X} + \hat{\mathcal{K}} \mathbb{E}[\hat{X}] + \hat{\mathcal{V}} \right),
        \end{equation}
        where
        $$d\hat{X} = \left[ (\hat{\mathcal{A}} + \hat{\mathcal{B}} \hat{\mathcal{P}}) \hat{X} + \hat{\mathcal{B}} \hat{\mathcal{K}} \mathbb{E}[\hat{X}] + \hat{\mathcal{B}} \hat{\mathcal{V}} + \hat{\mathfrak{f}} \right] dt + \hat{\mathcal{D}} dW_0,\quad \hat{X}(0) = [\xi_0^\top, \bar{\xi}^\top, 0]^\top, $$
        $$d\mathbb{E}[\hat{X}] = \left[(\hat{\mathcal{A}} + \hat{\mathcal{B}} \hat{\mathcal{M}}) \mathbb{E}[\hat{X}] + \hat{\mathcal{B}} \hat{\mathcal{V}} + \hat{\mathfrak{f}} \right] dt, \quad \mathbb{E}[\hat{X}(0)] = [\bar{\xi}_0^\top, \bar{\xi}^\top, 0].$$
    \end{mythm}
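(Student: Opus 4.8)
The plan is to mirror, in the cooperative setting, the argument used for Theorem \ref{leader thm2}. By Theorem \ref{team leader thm1}, under (A4) the convexity condition (\ref{convexity condition of team leader}) holds automatically, since $Q_0\ge0$ and $R_0>0$ force $\frac12\mathbb{E}\int_0^T[||\tilde{x}_0-\Gamma_0\tilde{x}^{(N)}||^2_{Q_0}+||u_0||^2_{R_0}]dt\ge0$; hence an optimal control exists and is characterized by the stationarity condition (\ref{stationarity condition of team leader}). Solving that condition for $\hat{u}_0$ gives the open-loop form (\ref{open-loop optimal strategies of team leader}) and, substituting back, the Hamiltonian system (\ref{Hamiltonian system of team leader}). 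The remaining task is to decouple this high-dimensional FBSDE into a feedback representation of $\mathbb{E}_0[\hat{y}_0]$.

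First I would apply $\mathbb{E}_0[\cdot]$ to (\ref{Hamiltonian system of team leader}). Using that $\hat{x}_0$ and $\hat{\phi}_N$ are $\mathcal{F}^0_t$-adapted, the de-aggregation identity $\mathbb{E}_0[\hat{x}^{(N)}]=\mathbb{E}[\hat{x}^{(N)}]$ of (\ref{conditional expectation for F0}) obtained from exchangeability and independence, and the fact (Lemma 5.4 of \cite{Xiong-08}) that conditioning removes the $dW^{(N)}$-martingale part and leaves only a $dW_0$ term, the system collapses to the closed system (\ref{conditional Hamiltonian system of team leader}) in the variables $(\hat{x}_0,\mathbb{E}[\hat{x}^{(N)}],\mathbb{E}_0[\hat{\psi}_N];\mathbb{E}_0[\hat{y}_0],\mathbb{E}_0[\hat{y}^{(N)}],\hat{\phi}_N)$. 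Stacking the forward triple into $\hat{X}$ and the backward triple into $\hat{Y}$ with the block coefficient matrices $\hat{\mathcal{A}},\hat{\mathcal{B}},\hat{\mathcal{A}}_1,\hat{\mathcal{B}}_1,\hat{\mathcal{A}}_2,\hat{\mathcal{B}}_2,\hat{\mathfrak{f}},\hat{\mathfrak{f}}_1$ defined above yields the mean-field-type FBSDE (\ref{extended dimensional Hamiltonian system of team leader}), whose dimension is independent of $N$.

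Next I would impose the affine ansatz $\hat{Y}=\hat{\mathcal{P}}\hat{X}+\hat{\mathcal{K}}\mathbb{E}[\hat{X}]+\hat{\mathcal{V}}$ with zero terminal data, differentiate by It\^o's formula using the forward equation in (\ref{extended dimensional Hamiltonian system of team leader}), and match, against the backward equation, the $dW_0$-coefficient, the $\hat{X}$-coefficient, the $\mathbb{E}[\hat{X}]$-coefficient, and the remaining inhomogeneous term. This produces $\hat{Z}=\hat{\mathcal{P}}\hat{\mathcal{D}}$, the nonsymmetric Riccati equation (\ref{Riccati-1 of team leader}) for $\hat{\mathcal{P}}$, the equation (\ref{Riccati-2 of team leader}) for $\hat{\mathcal{K}}$ — equivalently the Riccati equation (\ref{Riccati-1+2 of team leader}) for $\hat{\mathcal{M}}=\hat{\mathcal{P}}+\hat{\mathcal{K}}$ — and the linear ODE (\ref{BODE-1 of team leader})/(\ref{BODE-2 of team leader}) for $\hat{\mathcal{V}}$. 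Under the hypothesis that (\ref{Riccati-1 of team leader}) and (\ref{Riccati-2 of team leader}) are solvable, $\hat{\mathcal{V}}$ is uniquely determined by a linear ODE, and Theorem 4.1 of \cite{Ma-Yong-99} then guarantees that (\ref{extended dimensional Hamiltonian system of team leader}), hence the original Hamiltonian system, admits a unique adapted solution. Reading off the first block via $e_1=[I,0,0]$ gives $\mathbb{E}_0[\hat{y}_0]=e_1(\hat{\mathcal{P}}\hat{X}+\hat{\mathcal{K}}\mathbb{E}[\hat{X}]+\hat{\mathcal{V}})$; inserting this into (\ref{open-loop optimal strategies of team leader}) yields the feedback form (\ref{optimal strategy of team leader}), and the closed-loop dynamics for $\hat{X}$ and $\mathbb{E}[\hat{X}]$ follow by substitution. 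Uniqueness of the optimal control is a consequence of the strict convexity of $\hat{J}^N_0$ in $u_0$ (from $R_0>0$), so the stationarity condition has at most one solution.

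The main obstacle I anticipate is the bookkeeping in the conditional-expectation reduction and in the coefficient comparison: one must verify that applying $\mathbb{E}_0[\cdot]$ to the forward-backward coupling — in particular to the $\hat{\psi}_N$ equation, which appears precisely because the leader's state now contains the followers' costate $\hat{\phi}_N$ — really produces a closed system with the stated block structure, and that the It\^o expansion of the ansatz reproduces exactly the $\hat{\mathcal{A}}_2\mathbb{E}[\hat{X}]$ and $\hat{\mathcal{B}}_2\mathbb{E}[\hat{X}]$ cross terms with the correct signs, where the cooperative data $Q_\Gamma,Q_{\Gamma_1},Q_\eta$ now replace the game-case factors $(I-\Gamma/N)^\top Q(\cdot)$. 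This is routine but sign-sensitive; everything else reduces to linear algebra and an appeal to the cited solvability results.
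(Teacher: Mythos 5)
Your proposal is correct and follows essentially the same route as the paper: the paper's justification for Theorem \ref{team leader thm2} is precisely the derivation laid out in Subsection 4.2 (stationarity from variational analysis under (A4), conditional expectation $\mathbb{E}_0[\cdot]$ to close the system via de-aggregation, dimension expansion into $(\hat{X},\hat{Y},\hat{Z})$, the affine ansatz with coefficient matching yielding (\ref{Riccati-1 of team leader})--(\ref{BODE-2 of team leader}), and an appeal to Theorem 4.1 of \cite{Ma-Yong-99}). Your anticipated bookkeeping concerns are exactly where the paper's work lies, and your treatment of them matches the paper's.
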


    Combining Theorem \ref{team followers thm2} and Theorem \ref{team leader thm2}, (\ref{optimal strategy of team followers}) and (\ref{optimal strategy of team leader}) constitute a decentralized Stackelberg-team equilibrium.

    \section{Numerical simulation}

    In this section, we provides numerical examples for Problem {\bf (PS)} to validate our conclusions, and since Problem {\bf (PG)} is analogous, we omit the details.
    Consider Problem {\bf (PS)} for 30 followers.
    We set $A_0=0.05$, $B_0=0.1$, $f_0=1$, $D_0=1$, $A=-0.05$, $B=0.05$, $f=1$, $D=1$, $\Gamma_0=1$, $\eta_0=1$, $Q_0=1$, $R_0=1$, $\Gamma=0.8$, $\Gamma_1=1$, $\eta=0.05$, $Q=1$, $R=0.1$. The time interval is $[0,10]$.
    The initial states of 30 followers are independently drawn from a uniform distribution on $[0,20]$, while the initial states of the leader is drawn from a uniform distribution on $[0,10]$.
    The curves of followers' Riccati equations $\hat{P}(\cdot)$, $\hat{K}(\cdot)$ and $\hat{\Pi}(\cdot)$, described by (\ref{Riccati-1 of team followers}), (\ref{Riccati-2 of team followers}) and (\ref{Riccati-1+2 of team followers}), are shown in Figure \ref{PKPi_plot}.
    \begin{figure}[htbp]
        \centering\includegraphics[width=12cm]{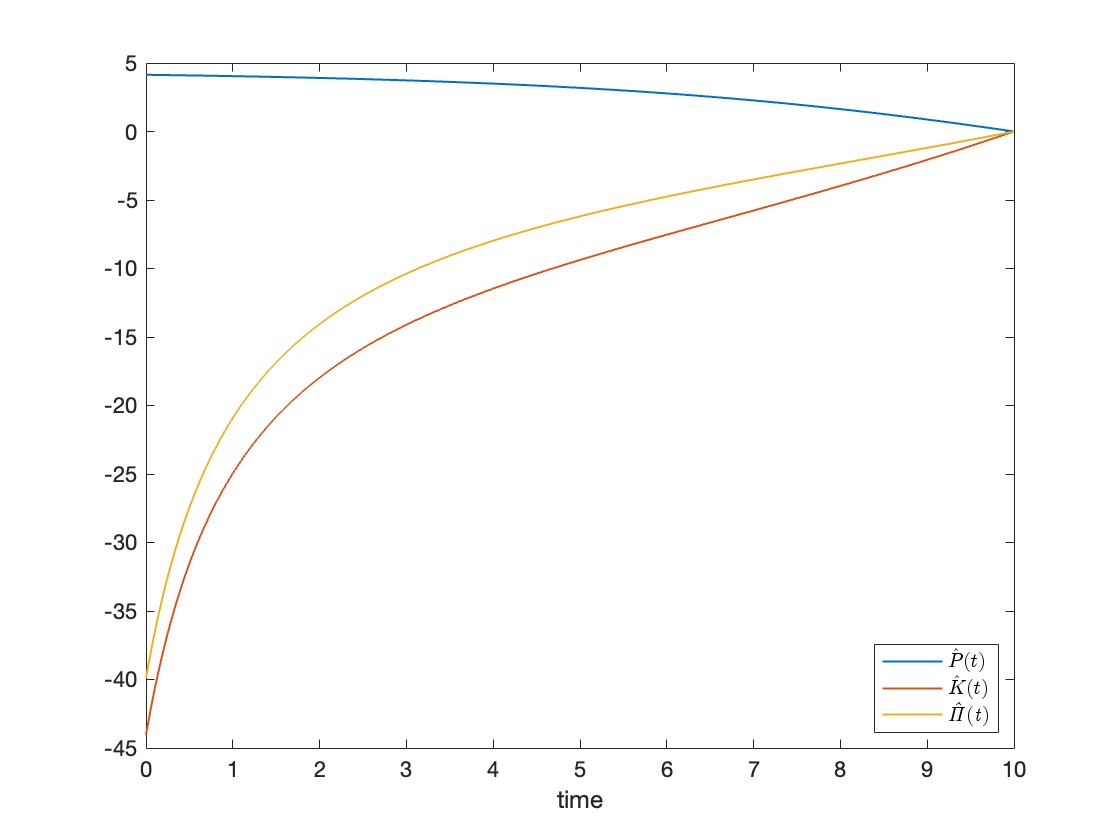}
        \caption{The curves of $\hat{P}(t)$, $\hat{K}(t)$ and $\hat{\Pi}(t)$}
        \label{PKPi_plot}
    \end{figure}
    The solution to the leader's Riccati equation (\ref{Riccati-1 of team leader}), $\hat{\mathcal{P}}(\cdot) \in \mathbb{R}^{3 \times 3}$, is depicted in Figure \ref{mathcal_P_plot}.
    \begin{figure}[htbp]
        \centering\includegraphics[width=12cm]{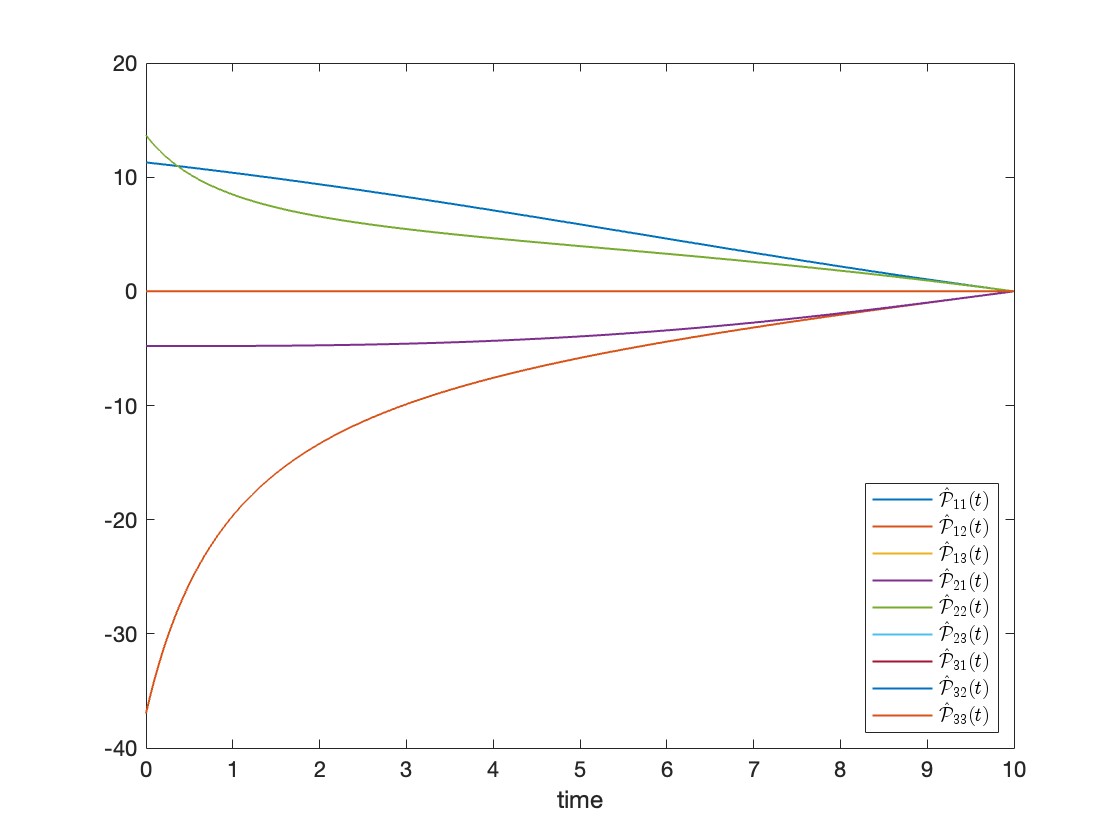}
        \caption{The curve of $\hat{\mathcal{P}}(\cdot)$}
        \label{mathcal_P_plot}
    \end{figure}
    Under the strategies (\ref{optimal strategy of team followers}) and (\ref{optimal strategy of team leader}), the trajectories of 30 followers and $\mathbb{E}[\hat{x}_i]$ are shown in Figure \ref{trajectory_plot}.
    In which the gradient surface represents $\mathbb{E}[\hat{x}_i]$, we can observe that the trajectories of 30 followers closely approximates $\mathbb{E}[\hat{x}_i]$.
    \begin{figure}[htbp]
        \centering\includegraphics[width=12cm]{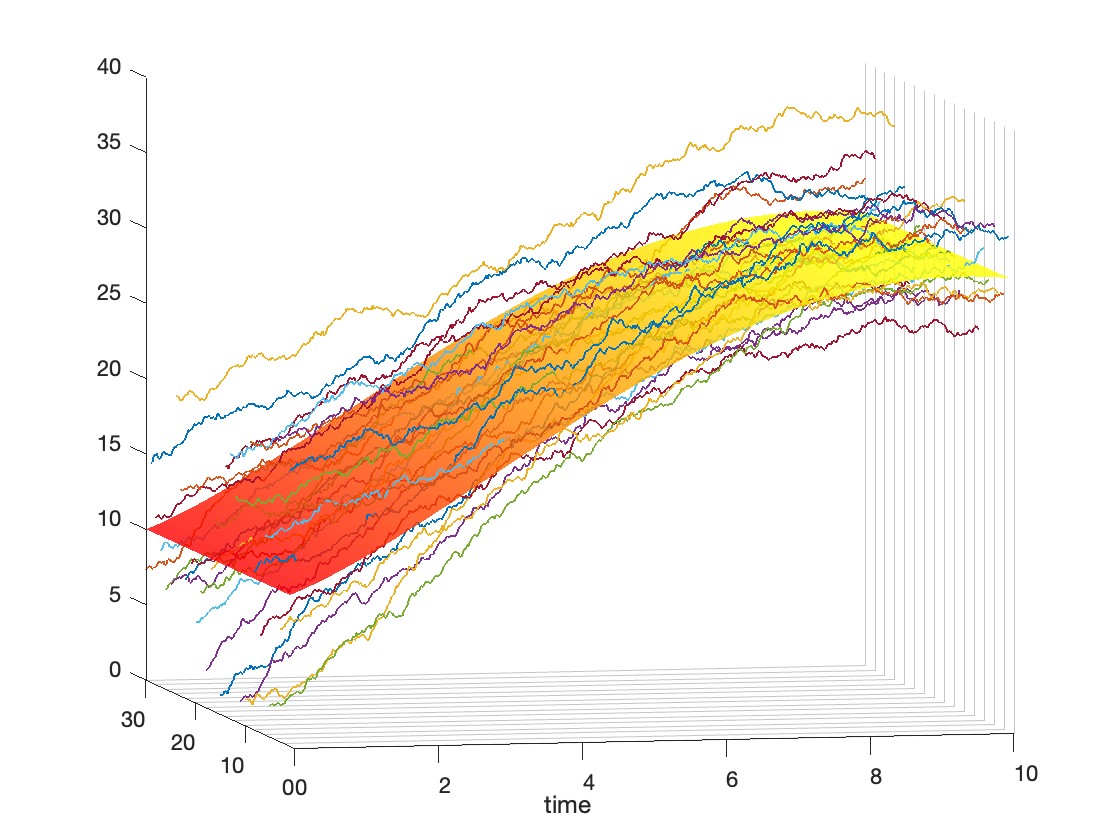}
        \caption{Trajectories of 30 followers and $\mathbb{E}[\hat{x}_i]$}
        \label{trajectory_plot}
    \end{figure}

    \section{Conclusions}

    In this paper, we have explored an LQ Stackelberg MF games and teams problem with arbitrary population sizes by a new de-aggregation method, and construct a set of exact decentralized Stackelberg-Nash or Stackelberg-team equilibrium strategies.
    In future research, this method could be applied to more complex scenarios, such as state equations with coupled MF terms, and state equations with multiplicative noise.

\end{document}